\newcommand{\x}{x}
\newcommand{\Dx}{\partial}
\newcommand{\ie}{{\it i.e.}}
\newcommand{\eg}{{\it e.g.}}
\newcommand{\Cov}{\operatorname{Cov}}
\def\hoeven#1{{#1}}
\newcommand{\EE}{\mathbb{E}}
\def\Q{\mathbb Q}
\newtheorem{thm}{Theorem}
\newtheorem{cor}[thm]{Corollary}
\def\<#1>{\langle#1\rangle}
\def\deg{\operatorname{deg}}
\def\threeFtwo#1#2#3#4#5#6{{_3F_2}\biggl(\begin{matrix}
  {#1}\kern.707em {#2}\kern.707em{#3}\\{#4}\kern1em{#5}
\end{matrix}\,\bigg|\,#6\biggr)}
\def\twoFone#1#2#3#4{{_2F_1}\biggl(\begin{matrix}
  {#1}\kern.707em {#2}\\{#3}
\end{matrix}\,\bigg|\,#4\biggr)}
\def\stepset#1#2#3#4#5#6#7#8{%
  \begin{picture}(20,20)(-10,-10)
    \put(0,0){\ifx1#1\thicklines\let\x\vector\else\thinlines\let\x\line\fi\x(-1,-1){10}}
    \put(0,0){\ifx1#2\thicklines\let\x\vector\else\thinlines\let\x\line\fi\x(0,-1){10}}
    
\put(0,0){\ifx1#3\thicklines\let\x\vector\else\thinlines\let\x\line\fi\x(1,-1){10}}

\put(0,0){\ifx1#4\thicklines\let\x\vector\else\thinlines\let\x\line\fi\x(-1,0){10}}

\put(0,0){\ifx1#5\thicklines\let\x\vector\else\thinlines\let\x\line\fi\x(1,0){10}}
    \put(0,0){\ifx1#6\thicklines\let\x\vector\else\thinlines\let\x\line\fi\x(-1,1){10}}
    \put(0,0){\ifx1#7\thicklines\let\x\vector\else\thinlines\let\x\line\fi\x(0,1){10}}
           \put(0,0){\ifx1#8\thicklines\let\x\vector\else\thinlines\let\x\line\fi\x(1,1){10}}
  \end{picture}}
\newcounter{mytable}\setcounter{mytable}{0}\def\mytable{\noindent\refstepcounter{mytable}\textbf{Table~\arabic{mytable}}}
\title[Non-D-finite excursions in the quarter plane]{Non-D-finite excursions in the quarter plane}
\author[Alin Bostan]{Alin Bostan}
\address{INRIA (France)}
\email{Alin.Bostan@inria.fr}
\author[Kilian Raschel]{Kilian Raschel}
\address{CNRS \& F\'ed\'eration Denis Poisson \& Laboratoire de Math\'ematiques et Physique Th\'eorique (France)}
\email{Kilian.Raschel@lmpt.univ-tours.fr}
\author[Bruno Salvy]{Bruno Salvy}
\address{INRIA (France)}
\email{Bruno.Salvy@inria.fr}
\date{\today}
\begin{document}

\begin{abstract}The number of excursions (finite paths starting and ending at the origin) having a given number of steps and obeying various geometric constraints is a classical topic of combinatorics and probability theory. 
We prove that the sequence $(e^{\mathfrak{S}}_n)_{n\geq 0}$ of numbers of excursions in the quarter plane corresponding to a nonsingular step set~$\mathfrak{S} \subseteq \{0,\pm 1 \}^2$ with infinite group does not satisfy any nontrivial linear recurrence with polynomial coefficients. Accordingly, in those cases, the trivariate generating function of the numbers of walks with given length and prescribed ending point is not D-finite. Moreover, we display the asymptotics of $e^{\mathfrak{S}}_n$. 
\end{abstract}

\maketitle 

%%%%%%%%%%%%%%%%%%%%%%%%%%%%%%%%%%%%%%%%%%%%%%%%%%%%%%%%%%%%%%%%%%%%%%%%%
\section{Introduction} 

\subsection{General context} Counting walks in a fixed region of the lattice
$\mathbb{Z}^d$ is a classical problem in probability theory~\cite{Spitzer64,Feller68,DoSn84,Cohen92,FeFrSo92,Weiss94,Woess00,BoBo08,LaLi10,BlVo11} and in enumerative
combinatorics~\cite{Polya21,Narayana79,Mohanty79}.
In recent years, the case of walks restricted to the quarter plane
$\mathbb{N}^2 = \{ (i,j) \in \mathbb{Z}^2 \, | \, i\geq 0, \, j \geq 0 \} $
has received special attention, and much progress has been done on this
topic~\cite{Kreweras65,Maher78,GB86,GeZe92,FaIaMa99,BM02,BoPe03,Bousquet05,Mishna07,BoKa09,KaKoZe09,Mishna09,MiRe09,BoKa10,BoMi10,FaRa10,FaRa11,KuRa11,KuRa12,FaRa12,Raschel12,MeMi13,DeWa13,BoKuRa13,JoMiYe13,BCHKP}.
Given a {finite} set $\mathfrak{S}$ of allowed steps, the general problem is to study
$\mathfrak{S}$-walks in the quarter plane~$\mathbb{N}^2$, that is walks
confined to~$\mathbb{N}^2$, starting at $(0,0)$ and using steps
in~$\mathfrak{S}$ only. Denoting by $f_{\mathfrak{S}}(i,j,n)$ the number of
such walks that end at~$(i,j)$ and use exactly~$n$ steps, the main high-level
objective is to \emph{understand} the generating function \[
F_{\mathfrak{S}}(x,y,t)=\sum_{i,j,n\geq 0} f_{\mathfrak{S}}(i,j,n)x^i y^j t^n
\; \in \mathbb{Q}[[x,y,t]], \] since this \emph{continuous} object captures
a great amount of interesting combinatorial information about the
\emph{discrete} object $f_{\mathfrak{S}}(i,j,n)$. For instance, the
specialization $F_{\mathfrak{S}}(1,1,t)$ is the generating function of the numbers of ${\mathfrak{S}}$-walks with
prescribed length, the specialization $F_{\mathfrak{S}}(1,0,t)$ is that of
${\mathfrak{S}}$-walks ending on the horizontal axis, and the specialization
$F_{\mathfrak{S}}(0,0,t)$ counts ${\mathfrak{S}}$-walks returning to the
origin, called ${\mathfrak{S}}$-\emph{excursions}.

\subsection{Questions} From the combinatorial point of view, the ideal goal
would be to find a closed-form expression for $f_{\mathfrak{S}}(i,j,n)$, or at
least for $F_{\mathfrak{S}}(x,y,t)$. This is not possible in general, even if
one restricts to particular step sets~$\mathfrak{S}$. Therefore, it is
customary to address more modest, still challenging, questions such as: 
{What is the asymptotic behavior of the sequence $f_{\mathfrak{S}}(i,j,n)$?
What are the structural properties of $F_\mathfrak{S}(x,y,t)$: is it} rational?
is it \emph{algebraic}\footnote{That is, root of a polynomial in $\mathbb{Q}[x,y,t,T]$.}?
or more generally
 \emph{D-finite}\footnote{In one variable~$t$ this means solution of a linear differential equation with coefficients in~$\mathbb{Q}[t]$; in several variables the appropriate generalization~\cite{Lipshitz1989} is that the set of all partial derivatives spans a finite-dimensional vector space over~$\mathbb{Q}(x,y,t)$.}? These questions are
related, since the asymptotic behavior of the coefficient sequence of a power
series is well understood for rational, algebraic and D-finite power series~\cite[Part~B]{FlSe09}.

\subsection{Main result} In this work, we prove that the generating function
\[F_{\mathfrak{S}}(0,0,t) = \sum_{n \geq 0} e^{\mathfrak{S}}_n t^n \; \in
\mathbb{Q}[[t]]\] of the sequence $(e^{\mathfrak{S}}_n)_{n\geq 0}$ of
${\mathfrak{S}}$-excursions \emph{is not D-finite} for a large class of
walks in the quarter plane. Precisely, this large class corresponds to all
\emph{small-step} sets $\mathfrak{S} \subseteq \{0,\pm 1 \}^2{\setminus \{0,0\}}$ for which a certain group $G_{\mathfrak{S}}$ of birational
transformations \emph{is infinite}, with the exception of a few cases for
which $F_{\mathfrak{S}}(0,0,t) = 1$ is trivially D-finite (these exceptional
cases are called \emph{singular}, see below). If $\chi = \chi_{\mathfrak{S}}$
denotes the \emph{characteristic polynomial} of the step set~$\mathfrak{S}$
defined by \[\chi(x,y) = \sum_{(i,j)\in\mathfrak{S}}x^{i}y^{j} \; \in
\mathbb{Q}[x,x^{-1},y,y^{-1}],\] then the group $G_{\mathfrak{S}}$ is
defined~\cite{FaIaMa99,BoMi10} as a group of rational automorphisms of
$\mathbb{Q}(x,y)$ that leave invariant the (Laurent) polynomial $\chi(x,y)$.
Up to some equivalence relations, there are 74 cases of nonsingular step sets
in $\mathbb{N}^2$, out of which 51 cases have an infinite group~\cite{BoMi10}. 
These 51 cases are depicted in Table~\ref{tab:2d} in
Appendix~\ref{sec:appendix}. With these definitions, our main result can be
stated as follows.

\begin{thm}\label{theo:main} Let $\mathfrak{S} \subseteq \{0,\pm 1 \}^2$ be
any of the 51 nonsingular step sets in $\mathbb{N}^2$ with infinite group~$G_{\mathfrak{S}}$. Then the generating function $F_{\mathfrak{S}}(0,0,t)$ of
${\mathfrak{S}}$-excursions is not D-finite. Equivalently, the excursion sequence $(e^{\mathfrak{S}}_n)_{n\geq 0}$ does not satisfy
any nontrivial linear recurrence with polynomial coefficients. In
particular, the full generating function $F_{\mathfrak{S}}(x,y,t)$ is not
D-finite. \end{thm}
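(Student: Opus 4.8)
The plan is to deduce non-D-finiteness from the precise first-order asymptotics of the excursion sequence, combining two ingredients: the sub-exponential exponent appearing in that asymptotics is irrational precisely because $G_{\mathfrak{S}}$ is infinite, whereas the coefficient sequence of a D-finite power series with integer coefficients can only carry a rational exponent in its asymptotics. So, first, for each of the 51 step sets I would establish an estimate $e^{\mathfrak{S}}_n = K_{\mathfrak{S}}\,\rho_{\mathfrak{S}}^{\,n}\,n^{\alpha_{\mathfrak{S}}}\,(1+o(1))$, valid along the sublattice on which $e^{\mathfrak{S}}_n\neq 0$, with a nonzero constant $K_{\mathfrak{S}}>0$. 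The exponential rate comes from a two-dimensional saddle-point analysis of $e^{\mathfrak{S}}_n=[x^0y^0]\,\chi(x,y)^n$: one has $\rho_{\mathfrak{S}}=\min_{(x,y)\in(0,\infty)^2}\chi(x,y)$, attained at the algebraic critical point $(x^\ast,y^\ast)$ solving $\partial_x\chi=\partial_y\chi=0$, and this rate is the same as for the unconstrained walk. The polynomial factor $n^{\alpha_{\mathfrak{S}}}$ reflects the confinement to $\mathbb{N}^2$: after tilting the step distribution by $(x^\ast,y^\ast)$ so that it becomes centered, and applying a linear change of coordinates that normalises its covariance matrix, the quarter plane turns into a plane cone of apex angle $\theta_{\mathfrak{S}}\in(0,\pi)$, and the local limit theorem for random walks confined to a cone~\cite{DeWa13} gives $\alpha_{\mathfrak{S}}=-1-\pi/\theta_{\mathfrak{S}}$, where $\cos\theta_{\mathfrak{S}}=-c_{\mathfrak{S}}$ and $c_{\mathfrak{S}}$ is the correlation coefficient of that covariance matrix, an explicit algebraic number attached to $\chi$ at $(x^\ast,y^\ast)$. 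Nonsingularity of the 51 step sets is what guarantees $K_{\mathfrak{S}}>0$, equivalently that $e^{\mathfrak{S}}_n\neq 0$ for infinitely many $n$.

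The crux of the argument is then that $\theta_{\mathfrak{S}}/\pi\notin\mathbb{Q}$ for each of the 51 cases, whence $\alpha_{\mathfrak{S}}=-1-\pi/\theta_{\mathfrak{S}}\notin\mathbb{Q}$ (if $\pi/\theta_{\mathfrak{S}}$ were rational, so would be $\theta_{\mathfrak{S}}/\pi$). This is exactly the dichotomy that governs the group $G_{\mathfrak{S}}$: by~\cite{FaIaMa99,BoMi10}, $G_{\mathfrak{S}}$ is finite if and only if $\theta_{\mathfrak{S}}/\pi\in\mathbb{Q}$, so membership in the infinite-group list forces $\theta_{\mathfrak{S}}/\pi$ irrational. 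Alternatively, I would check this case by case from the explicit values: $\cos\theta_{\mathfrak{S}}=-c_{\mathfrak{S}}$ turns out to be a rational number outside $\{0,\pm\tfrac{1}{2},\pm1\}$ (or, in the few remaining cases, an algebraic number not of the form $2\cos(\pi r)$ with $r\in\mathbb{Q}$), so that $\theta_{\mathfrak{S}}/\pi\notin\mathbb{Q}$ by Niven's theorem (respectively by the Conway--Jones classification of the algebraic numbers $2\cos(\pi r)$).

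To conclude, suppose for contradiction that $F_{\mathfrak{S}}(0,0,t)=\sum_{n\geq 0} e^{\mathfrak{S}}_n t^n$ is D-finite. Its coefficients are nonnegative integers bounded by $|\mathfrak{S}|^n$, and by the previous step its radius of convergence is $1/\rho_{\mathfrak{S}}\in(0,\infty)$, so $F_{\mathfrak{S}}(0,0,t)$ is a $G$-function and its minimal-order annihilating operator is a $G$-operator. By the Andr\'e--Chudnovsky--Katz theory a $G$-operator is globally nilpotent, hence has only rational local exponents; together with the transfer theorem of singularity analysis~\cite[Part~B]{FlSe09}, this would force $\alpha_{\mathfrak{S}}$ to be rational, contradicting the previous paragraph. (Equivalently: a sequence of rational numbers satisfying a nontrivial linear recurrence with polynomial coefficients and obeying an asymptotics of the shape $K\rho^{n}n^{\alpha}$ must have $\alpha\in\mathbb{Q}$.) Hence $F_{\mathfrak{S}}(0,0,t)$ is not D-finite, i.e.\ $(e^{\mathfrak{S}}_n)_{n\geq 0}$ satisfies no nontrivial linear recurrence with polynomial coefficients; and $F_{\mathfrak{S}}(x,y,t)$ cannot be D-finite either, because substituting $x=y=0$ into a D-finite power series in $x,y,t$ yields a D-finite power series in $t$~\cite{Lipshitz1989}.

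The main obstacle is the asymptotic estimate of the first step: establishing the sub-exponential term $n^{-1-\pi/\theta_{\mathfrak{S}}}$ uniformly over the 51 step sets --- in particular that the leading constant is nonzero, and that the possible periodicity of the walk as well as the boundary behaviour of $\chi$ on $(0,\infty)^2$ cause no difficulty --- is where the real analytic work lies, and it relies on a cone-confinement limit theorem in the style of~\cite{DeWa13} (or an ad hoc kernel-method and saddle-point analysis). A secondary delicate point is the transcendence input (the finite-versus-infinite-group criterion, or else a genuine finite case analysis via Niven and Conway--Jones), together with a careful verification that $F_{\mathfrak{S}}(0,0,t)$ does satisfy the arithmetic hypotheses needed to invoke the rationality of the local exponents of a $G$-operator.
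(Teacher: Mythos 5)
Your proposal follows essentially the same route as the paper: the Denisov--Wachtel cone asymptotics giving $e_n\sim K\rho^n n^{\alpha}$ with $\alpha=-1-\pi/\arccos(-c)$, the G-function input (Chudnovsky--Andr\'e plus Katz) forcing the exponent of an integer-valued, exponentially bounded D-finite sequence to be rational, and a proof that $\arccos(c)/\pi$ is irrational in each of the 51 cases, with the specialization argument of Lipshitz closing the loop for $F_{\mathfrak{S}}(x,y,t)$. The one step you should not lean on is your first justification of the irrationality, namely that ``by~\cite{FaIaMa99,BoMi10}, $G_{\mathfrak{S}}$ is finite if and only if $\theta_{\mathfrak{S}}/\pi\in\mathbb{Q}$'': no such equivalence is proved in those references (the angle criteria there concern either a different angle or only the zero-drift case, cf.~\cite{FaRa11}), and in fact the equivalence between finiteness of the group and rationality of $\alpha$ is precisely part of what this paper establishes (implication $(3)\Rightarrow(4)$ of Corollary~\ref{coro:main}), so invoking it would be circular. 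Your fallback --- the finite case analysis via Niven's theorem when $c$ is rational and a classification of the algebraic numbers of the form $\cos(\pi r)$, $r\in\mathbb{Q}$, otherwise --- is the viable route and is what the paper actually does, in the algorithmic form of checking that $R(x)=x^{\deg\mu_c}\mu_c\bigl(\tfrac{x^2+1}{2x}\bigr)$ is irreducible, of degree at most $28$, and has a coefficient of absolute value at least $3$, hence is not cyclotomic; note that only a couple of the 51 cases have rational $c$ (namely $c=\pm\tfrac14$), so the ``remaining cases'' requiring the cyclotomic/Conway--Jones-type argument are the vast majority, not the exception.
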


By combining Theorem~\ref{theo:main} with previously known results, we obtain
the following characterization of nonsingular small-step sets with
D-finite generating function.
\begin{cor}\label{coro:main}
Let $\mathfrak{S} \subseteq \{0,\pm 1 \}^2$ be
	any of the 74 nonsingular step sets in $\mathbb{N}^2$.
The following assertions are equivalent:
\begin{enumerate}
	\item[(1)]  The full generating function $F_\mathfrak{S}(x,y,t)$ is D-finite;
	\item[(2)] the generating function $F_\mathfrak{S}(0,0,t)$ of ${\mathfrak{S}}$-excursions is D-finite;
	\item[(3)] the excursion sequence $e^{\mathfrak{S}}_{2n}$ is  asymptotically equivalent to $K \cdot \rho^n \cdot n^\alpha$, for some $\alpha \in \mathbb{Q}$; %, $\rho \in \overline{\mathbb{Q}}$
	\item[(4)] the group~${G}_{\mathfrak{S}}$ is finite;
	\item[(5)] the step set $\mathfrak{S}$ has either an axial symmetry, or a zero drift and cardinality different from~5.
\end{enumerate} 
Moreover, under \emph{(1)--(5)}, the cardinality of ${G}_{\mathfrak{S}}$ is
equal to $ 2 \cdot \min\left\{ \ell \in
\mathbb{N}^{\star} \, | \, \frac{\ell}{\alpha + 1} \in \mathbb{Z}\right\} $.
%where $\alpha$ is the rational number defined in \emph{(3)}.
\end{cor}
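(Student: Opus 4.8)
The plan is to deduce Corollary~\ref{coro:main} from Theorem~\ref{theo:main}, from results already available in the literature, and from one asymptotic estimate which I would establish by reducing to a centered random walk and invoking the local limit theorem for walks in cones of Denisov and Wachtel~\cite{DeWa13}. The estimate to prove is: for each of the $74$ nonsingular step sets, and for $n$ ranging over the arithmetic progression on which $e^{\mathfrak{S}}_{n}$ does not vanish,
\[
e^{\mathfrak{S}}_{n}\;\sim\;\kappa\cdot\chi(x_0,y_0)^{n}\cdot n^{\alpha},\qquad \alpha=-1-\frac{\pi}{\theta_{\mathfrak{S}}},\qquad \theta_{\mathfrak{S}}=\arccos(-c_{\mathfrak{S}}),
\]
with $\kappa>0$, $(x_0,y_0)$ the critical point described below, and $c_{\mathfrak{S}}$ the correlation coefficient of an associated covariance matrix; along $2n$ this becomes the shape occurring in item~$(3)$, with $\rho=\chi(x_0,y_0)^{2}$.

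For the estimate, I would first pass to a centered walk. The characteristic polynomial $\chi$ is strictly log-convex on $(0,\infty)^2$ and has a unique critical point $(x_0,y_0)\in(0,\infty)^2$ (a standard fact for these step sets); writing $\rho_0=\chi(x_0,y_0)$, the weighting of the step $(i,j)$ by $x_0^iy_0^j/\rho_0$ turns $\mathfrak{S}$ into a probability distribution with zero mean, for which the probability that the $\mathbb{N}^2$-confined random walk is at the origin after $n$ steps equals $e^{\mathfrak{S}}_n/\rho_0^n$. Let $\Sigma$ be the covariance matrix of this centered distribution and $c_{\mathfrak{S}}=\Sigma_{12}/\sqrt{\Sigma_{11}\Sigma_{22}}$. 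After the linear change of variables normalising $\Sigma$ to the identity, the first quadrant becomes a convex wedge of opening angle $\theta_{\mathfrak{S}}=\arccos(-c_{\mathfrak{S}})$, and the Denisov--Wachtel estimate for the probability of a length-$n$ excursion in such a wedge — with both endpoints at the corner, which is what produces the exponent $-1-\pi/\theta_{\mathfrak{S}}$ rather than $-1-\pi/(2\theta_{\mathfrak{S}})$ — yields the displayed asymptotics (after the customary passage to the sublattice on which $e^{\mathfrak{S}}_n$ is nonzero). I expect the verification of the hypotheses of the Denisov--Wachtel theorem — genuine exploration of the interior of the wedge, and aperiodicity on that sublattice — to be the main technical obstacle.

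Next, the number theory. Since $(x_0,y_0)$ is algebraic, so are $\rho_0$ and the entries of $\Sigma$, hence $c_{\mathfrak{S}}$ is algebraic and $\beta_{\mathfrak{S}}:=e^{i\theta_{\mathfrak{S}}}=-c_{\mathfrak{S}}+i\sqrt{1-c_{\mathfrak{S}}^2}$ is an algebraic number on the unit circle. The analytic description of $G_{\mathfrak{S}}$ in~\cite{FaIaMa99} presents it as a dihedral group of order $2q$ when $\theta_{\mathfrak{S}}/\pi=p/q$ in lowest terms, and as an infinite group when $\theta_{\mathfrak{S}}/\pi\notin\mathbb{Q}$; thus $G_{\mathfrak{S}}$ is finite if and only if $\beta_{\mathfrak{S}}$ is a root of unity, equivalently if and only if $\alpha\in\mathbb{Q}$. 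When $G_{\mathfrak{S}}$ is infinite, $\beta_{\mathfrak{S}}$ is not a root of unity, and applying the Gelfond--Schneider theorem to $\pi/\theta_{\mathfrak{S}}=\log(-1)/\log\beta_{\mathfrak{S}}$ — a ratio of logarithms of algebraic numbers that is not rational — shows that $\pi/\theta_{\mathfrak{S}}$, and therefore $\alpha$, is transcendental.

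Finally, the equivalences. $(1)\Leftrightarrow(2)\Leftrightarrow(4)$: if $G_{\mathfrak{S}}$ is finite then $F_{\mathfrak{S}}(x,y,t)$, hence its coefficient $F_{\mathfrak{S}}(0,0,t)$ of $x^0y^0$ (setting variables to $0$ preserves D-finiteness of a power series, by~\cite{Lipshitz1989}), is D-finite by the known treatment of the finite-group cases~\cite{BM02,Mishna09,BoKa10,BoMi10,KuRa12}; conversely, if $G_{\mathfrak{S}}$ is infinite then $F_{\mathfrak{S}}(0,0,t)$, and a fortiori $F_{\mathfrak{S}}(x,y,t)$, is not D-finite by Theorem~\ref{theo:main}. $(3)\Leftrightarrow(4)$: when $G_{\mathfrak{S}}$ is finite, $\alpha\in\mathbb{Q}$ and the estimate gives $(3)$ (for the period-$3$ Kreweras-type sets this is read along the progression where $e^{\mathfrak{S}}_{2n}\neq 0$); when $G_{\mathfrak{S}}$ is infinite, the estimate gives $e^{\mathfrak{S}}_{2n}\sim K\rho^{n}n^{\alpha}$ with $\alpha$ transcendental, which is incompatible with any $\alpha\in\mathbb{Q}$, so $(3)$ fails. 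The equivalence $(4)\Leftrightarrow(5)$ is the explicit group computation of~\cite{BoMi10}. Lastly, under $(1)$--$(5)$, write $\theta_{\mathfrak{S}}/\pi=p/q$ in lowest terms; then $\alpha+1=-q/p$, so $\min\{\ell\in\mathbb{N}^{\star}:\ell/(\alpha+1)\in\mathbb{Z}\}=\min\{\ell\in\mathbb{N}^{\star}:q\mid\ell\}=q$, whence $|G_{\mathfrak{S}}|=2q=2\cdot\min\{\ell\in\mathbb{N}^{\star}:\ell/(\alpha+1)\in\mathbb{Z}\}$.
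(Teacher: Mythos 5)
Your overall architecture is sound where it coincides with the paper's: $(4)\Rightarrow(1)$ by citation of the finite-group literature, $(1)\Rightarrow(2)$ by specialization \cite{Lipshitz1989}, the Denisov--Wachtel asymptotics with exponent $\alpha=-1-\pi/\arccos(-c)$ (this is exactly Theorem~\ref{theo:proba}, and your remark about the local limit theorem at the corner producing $-1-\pi/\theta$ rather than the exit-time exponent is correct), and $(4)\Leftrightarrow(5)$ by inspection of the classification. Your Gelfond--Schneider observation --- that once $\theta/\pi$ is known to be irrational, it is automatically transcendental because $e^{i\theta}$ is algebraic, hence $\alpha$ is transcendental and Theorem~\ref{theo:arithmetic} becomes unnecessary --- is a genuinely nice addition; it answers the first open problem stated in the conclusion of the paper.

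However, there is a real gap at the pivot of your argument: the claim, attributed to \cite{FaIaMa99}, that $G_{\mathfrak S}$ is dihedral of order $2q$ when $\theta_{\mathfrak S}/\pi=p/q$ and infinite when $\theta_{\mathfrak S}/\pi\notin\mathbb Q$. No such statement is available to cite for the group $G_{\mathfrak S}$ appearing in the corollary, which is the Bousquet-M\'elou--Mishna group of birational transformations of $\mathbb Q(x,y)$, independent of $t$. The closest existing result is the zero-drift criterion of \cite{FaRa11}, which concerns the group induced on the kernel curve (a $t$-dependent, a priori smaller quotient) and does not transfer for free to the $71$ models with nonzero drift, nor back to $G_{\mathfrak S}$ itself. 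Indeed, if the equivalence ``$G_{\mathfrak S}$ finite $\Leftrightarrow\theta_{\mathfrak S}/\pi\in\mathbb Q$'' were citable, Theorem~\ref{theo:main} would follow in two lines and the entire algorithmic content of Section~\ref{subsec:algorithmic} would be redundant; the paper instead \emph{proves} the implication ``$G_{\mathfrak S}$ infinite $\Rightarrow\alpha\notin\mathbb Q$'' for the $51$ models one by one, by computing the minimal polynomial $\mu_c$ and certifying that $x^{\deg\mu_c}\mu_c\bigl(\frac{x^2+1}{2x}\bigr)$ has no cyclotomic factor, and it closes the cycle $(2)\Rightarrow(3)\Rightarrow(4)$ using Theorem~\ref{theo:arithmetic} together with that computation, not a group-theoretic equivalence. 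Likewise the cardinality formula $|G_{\mathfrak S}|=2q$, which you derive from the same unproved citation, is in the paper only an empirical observation verified against the data of \cite{BoMi10}. To repair your proof you would need either to supply the case-by-case irrationality verification (and the table check for the $23$ finite-group models), or to actually prove the asserted relation between $|G_{\mathfrak S}|$ and $\theta_{\mathfrak S}$ for all $74$ nonsingular models.
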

Implication $(4)\Rightarrow (1)$ is a consequence of results in~\cite{BoMi10,BoKa10},
proofs of $(1)\Rightarrow (2) \Rightarrow (3) \Rightarrow (4)$ are given
in the present article, and the equivalence of (2) and (5) is read off the
tables in Appendix~\ref{sec:appendix}. Condition~(5) seems unnatural, its purpose is to eliminate the three rotations of the ``scarecrow'' walk with step sets depicted in Figure~\ref{fig-scarecrow}, which have zero drift and non-D-finite generating functions.
The observation on the cardinality seems new and interesting. It can be checked from the data~\cite[Tables~1--3]{BoMi10}.

\begin{figure}
\centerline{\includegraphics[height=3cm]{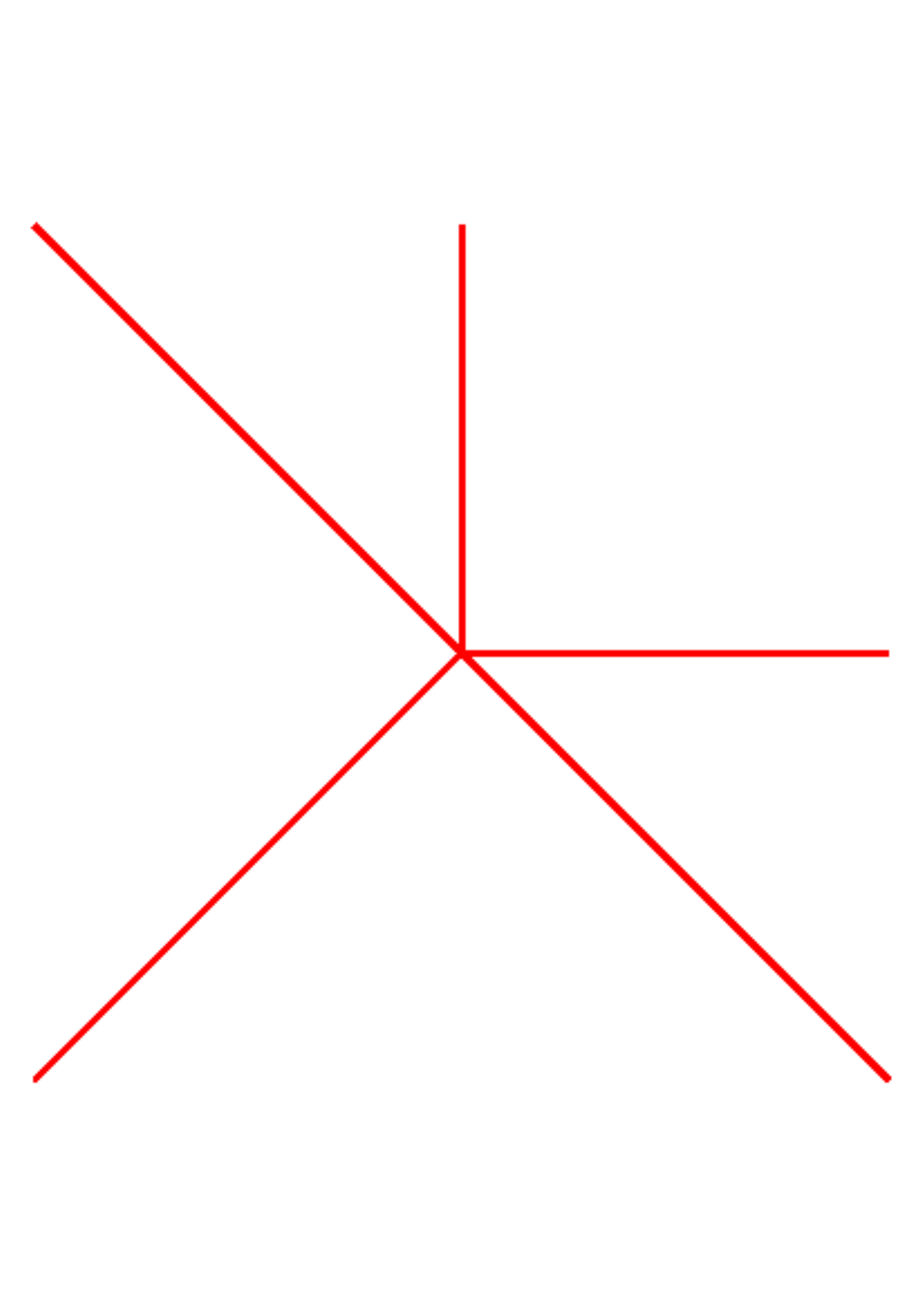}\quad\includegraphics[height=3cm]{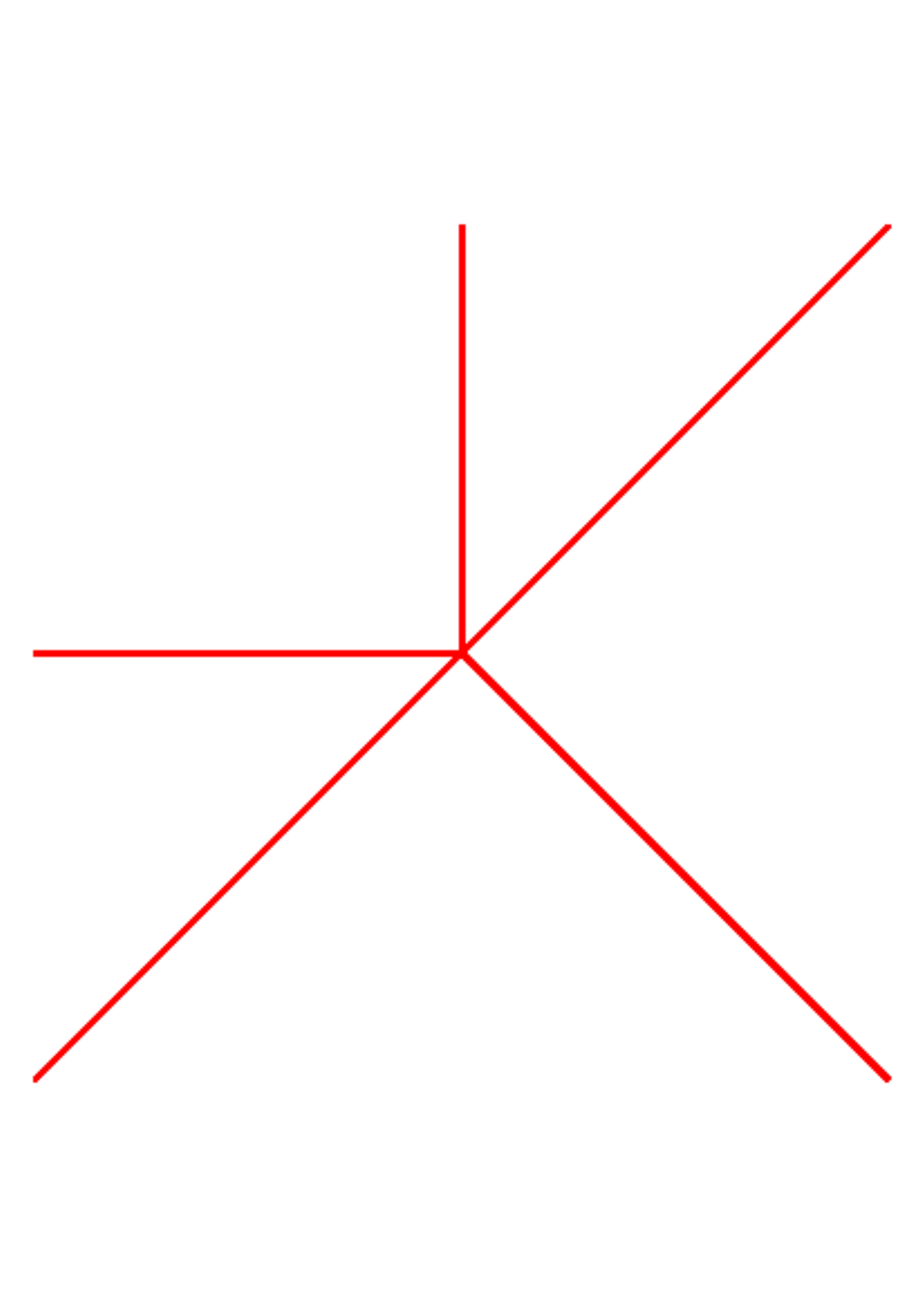}\quad\includegraphics[height=3cm]{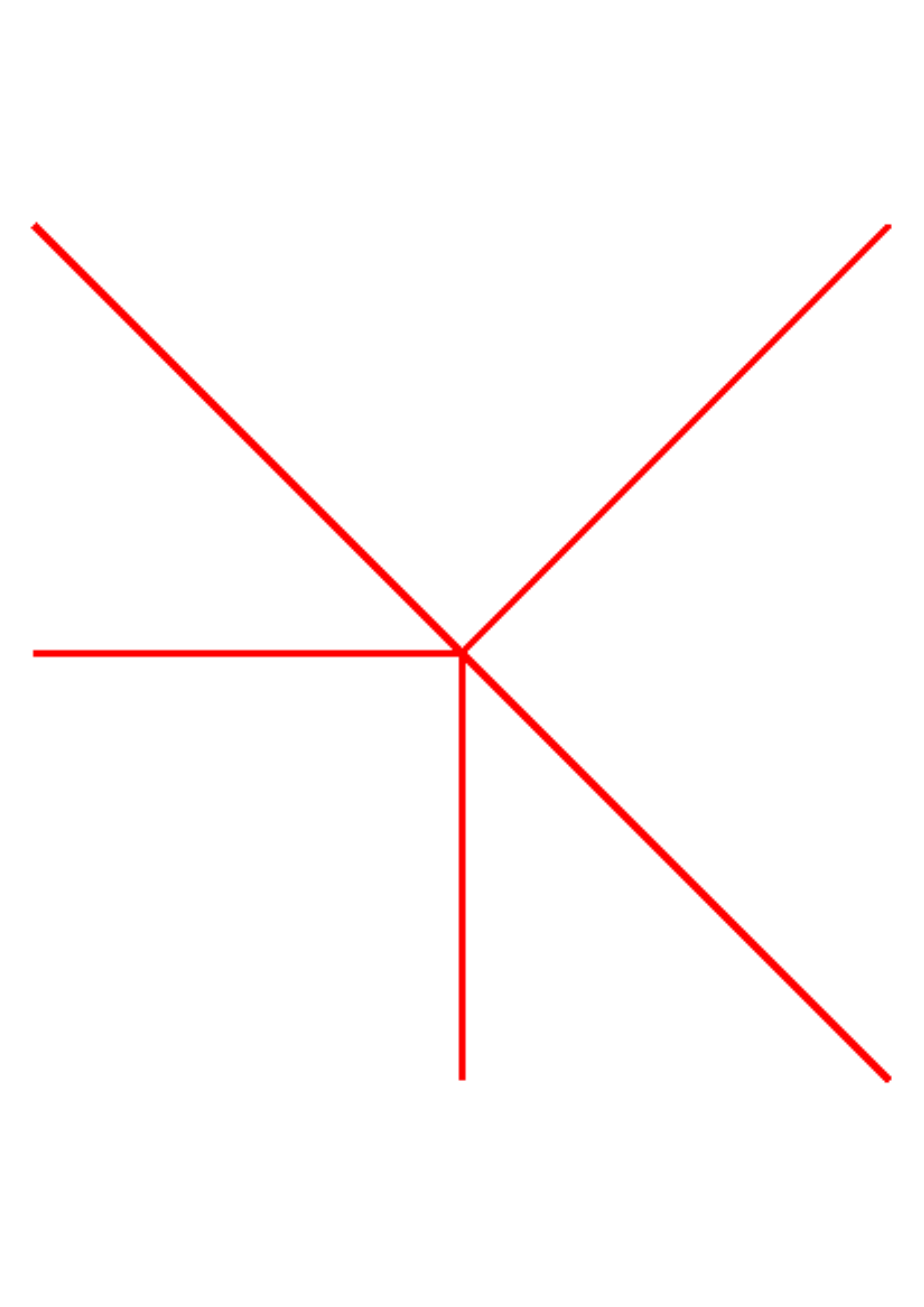}}
\caption{Rotations of a scarecrow. These are the three step sets (30, 40, 42 in Table~\ref{tab:2d}) of walks with zero drift that have a non-D-finite generating function.\label{fig-scarecrow}}
\end{figure}

\subsection{Previous results}
\subsubsection{Structural properties}
While it is known that unrestricted walks in $\mathbb{Z}^2$ have rational
generating functions and walks restricted to a half-plane in $\mathbb{Z}^2$
have algebraic generating functions (see~\cite{BaFl02} and~\cite[Proposition 2]{BoPe03}), a first intriguing result
about walks in the quarter plane is that their generating functions need not
be algebraic, and not even D-finite. For instance, Bousquet-M\'elou  and
Petkov{\v{s}}ek~\cite{BoPe03} proved that this is the case for the so-called
\emph{knight walk} starting at~$(1,1)$ with $\mathfrak{S}= \{(2, -1), (-1, 2)\}$. (Although this is not a walk with unit steps, it actually constitutes one of the initial motivations for the study of walks evolving in
the quarter plane.) It was later shown by Mishna and Rechnitzer~\cite{MiRe09} that 
even if one restricts to \emph{small-step walks}, where $\mathfrak{S} \subseteq \{0,\pm 1 \}^2$, there exist
step sets, such as $\mathfrak{S}=\{(-1,1),(1,1),(1,-1)\}$, for which the series
$F_{\mathfrak{S}}(x,y,t)$ is not D-finite (but in those cases~$F_{\mathfrak{S}}(0,0,t)=1$ is trivially D-finite).

In the remainder of this text, we restrict ourselves to small-step walks in
the quarter plane. Several sporadic cases of small-step walks with D-finite
generating functions have been known for a long time; the most famous ones are
Kreweras' walks~\cite{Kreweras65,Bousquet05}, with
$\mathfrak{S}=\{(-1,0),(0,-1),(1,1)\}$, and Gouyou-Beauchamps'
walks~\cite{GB86}, with $\mathfrak{S}=$$\{(1,0), (-1,0), (-1,1), (1,-1) \}$. A
whole class of small-step walks with D-finite generating functions was first
identified in~\cite[\S 2]{BoPe03}: this class contains step sets that admit an
axial symmetry. Another class, including $\mathfrak{S}=\{(0,1), (-1,0), (1,-1)
\}$ and $\mathfrak{S}=\{(0,1), (-1,0), (1,-1), (0,-1), (1,0), (-1,1) \}$,
corresponds to step sets that are left invariant by a Weyl group and whose
walks are confined to a corresponding Weyl chamber~\cite{GeZe92}.

A systematic classification of small-step walks with respect to the
D-finiteness of $F_{\mathfrak{S}}(1,1,t)$ was then undertaken by Mishna~\cite{Mishna07,Mishna09}
for step sets of cardinality at most three. A complete, still conjectural,
classification without this restriction was obtained by Bostan and Kauers~\cite{BoKa09} using computer algebra tools.
Almost simultaneously, Bousquet-M\'elou and Mishna~\cite{BoMi10} 
proved that among the~$2^8$ possible cases of small-step walks in the quarter plane, there are
exactly 79 inherently different cases. Among these, they
identified 22 cases of step sets $\mathfrak{S}$ having a D-finite
generating function $F_{\mathfrak{S}}(x,y,t)$.

A 23rd case, namely $\mathfrak{G} = \{(1,0), (-1,0), (1,1),
(-1,-1) \}$, known as \emph{Gessel walks}, is notoriously challenging. Its
generating function for excursions, $F_{\mathfrak{G}}(0,0,t)$, was first proved
to be D-finite by Kauers, Koutschan and Zeilberger~\cite{KaKoZe09}, using
computer algebra techniques. Then Bostan and Kauers~{\cite{BoKa10}} discovered
and proved that the full generating function $F_{\mathfrak{G}}(x,y,t)$ is
D-finite, and even algebraic, again using computer algebra.
It was proved afterwards by Fayolle and Raschel~\cite{FaRa10}
using a different approach that for any fixed value $t_0 \in (0,1/4)$, the
bivariate generating function $F_{\mathfrak{G}}(x,y,t_0)$ for Gessel walks is
algebraic over $\mathbb{R}(x,y)$, hence over~$\mathbb Q(x,y)$. Very recently, a ``purely human'' (\ie, computer-free) proof of the algebraicity of the full generating function
$F_{\mathfrak{G}}(x,y,t)$ was given by Bostan, Kurkova and Raschel~\cite{BoKuRa13}.

Bousquet-M\'elou and Mishna~\cite{BoMi10} showed that these 23 cases of step
sets $\mathfrak{S}$ with D-finite generating function
$F_{\mathfrak{S}}(x,y,t)$ correspond to walks possessing a finite group
$G_{\mathfrak{S}}$. Informally speaking, the \emph{group of a walk} is a
notion that captures symmetries of the step set and that is used to generalize
a classical technique in lattice combinatorics called the ``reflection
principle''~\cite[Ch.~III.1]{Feller68}. 

Moreover, it was conjectured
in~\cite{BoMi10} that the 56 remaining models with infinite group have
non-D-finite generating functions $F_{\mathfrak{S}}(x,y,t)$. This was proved
by Kurkova and Raschel~\cite{KuRa12} for the 51 \emph{nonsingular walks}, that
is, for walks having at least one step from the set $\{(-1,0), (-1,-1), (0,-1)
\}$. This result is obtained as a consequence of the non-D-finiteness of this
series as a function of $x,y$. We provide an alternative proof of this result by showing the non-D-finiteness of $F_{\mathfrak{S}}(0,0,t)$, about which nothing was known.

As for the singular walks, two out of the five cases
were already shown to have non-D-finite generating functions by Mishna and Rechnitzer~\cite{MiRe09}.
The last 3~cases of singular walks have a
generating function $F_{\mathfrak{S}}(x,y,t)$ that has just been proved to be non-D-finite by Melczer and Mishna~\cite{MeMi13}.

\subsubsection{Closed-form expressions} 
Closed forms are known for generating functions of the walks named after Kreweras~\cite{Kreweras65}, Gouyou-Beauchamps~\cite{GB86} and Gessel~\cite{KaKoZe09,BoKa10,BoKuRa13}.
Some other explicit formulas for
$f_{\mathfrak{S}}(i,j,n)$ and for $F_{\mathfrak{S}}(x,y,t)$, or some of their
specializations, have been obtained by Bousquet-Mélou and Mishna~\cite{BoMi10}
in cases when $\mathfrak{S}$ admits a finite group. A different type of
explicit expressions (integral representations) for the generating function of
Gessel walks was obtained by Kurkova and Raschel~\cite{KuRa11}. Their approach
was later generalized by Raschel~\cite{Raschel12} to all the 74 nonsingular
walks. Finally, Bostan, Chyzak, van Hoeij, Kauers and Pech~\cite{BCHKP}
used computer algebra tools to express all D-finite transcendental
functions $F_{\mathfrak{S}}(x,y,t)$ by iterated integrals of
Gaussian hypergeometric functions.% , but no ``purely human'' proof of this
% result is known yet.

\subsubsection{Asymptotics} Concerning asymptotics, conjectural results were
given by Bostan and Kauers \cite{BoKa09} for the coefficients of
$F_{\mathfrak{S}}(1,1,t)$ when this function is D-finite. Some of these
conjectures have been proved by Bousquet-Mélou and Mishna~\cite{BoMi10}.
Explicit asymptotics for the coefficients of $F_{\mathfrak{S}}(0,0,t)$ and
$F_{\mathfrak{S}}(1,1,t)$ were conjectured even in non-D-finite cases in some
unpublished tables~\cite{BoKa}. In a recent work, Denisov and
Wachtel~\cite{DeWa13} have obtained explicit expressions for the asymptotics
of excursions $F_{\mathfrak{S}}(0,0,t)$ in a much broader setting; in
particular, their results provide (up to a constant) the dominating term in
the asymptotics of the $n$-th coefficient of $F_{\mathfrak{S}}(0,0,t)$ in
terms of the step set. Even more recently, Fayolle and Raschel~\cite{FaRa12}
showed that the dominant singularities of $F_{\mathfrak{S}}(0,0,t)$,
$F_{\mathfrak{S}}(1,0,t)$ and $F_{\mathfrak{S}}(1,1,t)$ are algebraic numbers,
and announced more general and precise results about asymptotics of
coefficients of $F_{\mathfrak{S}}(0,0,t)$, $F_{\mathfrak{S}}(1,0,t)$ and
$F_{\mathfrak{S}}(1,1,t)$. Furthermore, a combinatorial approach is proposed
by Johnson, Mishna and Yeats~\cite{JoMiYe13} that allows to find tight bounds
on the dominant singularities of the latter generating functions.

\section{Number Theory, Probability and Algorithms}

\subsection{Contributions}
\label{subsec:contributions}
In the present work, we prove the non-D-finiteness of the generating function of
${\mathfrak{S}}$-excursions $F_{\mathfrak{S}}(0,0,t)$ for all 51 cases of nonsingular walks
with infinite group. As a corollary, we deduce the non-D-finiteness of the
full generating function $F_{\mathfrak{S}}(x,y,t)$ for those cases since D-finiteness is preserved by specialization at~(0,0)~\cite{Lipshitz1989}. 
This
corollary has been already obtained by Kurkova and Raschel~\cite{KuRa12}, but the approach here is
at the same time simpler, and delivers a more accurate information. This new
proof only uses asymptotic information about the coefficients of
$F_{\mathfrak{S}}(0,0,t)$, and arithmetic information about the constrained
behavior of the asymptotics of these coefficients when their generating function is D-finite. More precisely,
we first make explicit consequences of the general results by Denisov and Wachtel~\cite{DeWa13} in
the case of walks in the quarter plane. If $e_n = e^{\mathfrak{S}}_n$ denotes
the number of ${\mathfrak{S}}$-excursions of length $n$ using only steps in~$\mathfrak{S}$,
this analysis implies that, when $n$ tends to infinity, $e_n$ behaves like $K
\cdot \rho^n \cdot n^\alpha$, where $K= K(\mathfrak{S})>0$ is a real number,
$\rho= \rho (\mathfrak{S})$ is an algebraic number, and $\alpha =
\alpha(\mathfrak{S})$ is a real number such that $c =-
\cos(\frac{\pi}{1+\alpha})$ is an algebraic number. 
Explicit real approximations for $\rho$, $\alpha$ and $c$ can be determined to arbitrary precision.
Moreover, exact minimal polynomials of $\rho$ and $c$ can be determined
algorithmically starting from the step set~$\mathfrak{S}$. For the 51 cases of
nonsingular walks with infinite group, this enables us to prove that the
constant $\alpha=\alpha(\mathfrak{S})$ is not a rational number. The proof
amounts to checking that some explicit polynomials in $\mathbb{Q}[t]$ are not
cyclotomic. To conclude, we use a classical result in the arithmetic theory of
linear differential equations~\cite{DwGeSu94,Andre00,Garoufalidis08} about
the possible asymptotic behavior of an integer-valued, exponentially bounded D-finite
sequence, stating that if such a sequence grows like $K \cdot \rho^n \cdot
n^\alpha$, then $\alpha$ is necessarily a \emph{rational number}. 

\begin{quote}\sl In summary, our approach brings together (consequences of) a
strong probabilistic result~\cite{DeWa13} and a strong arithmetic
result~\cite{DwGeSu94,Andre00,Garoufalidis08}, and demonstrates that this combination
allows for the \emph{algorithmic} certification of the non-D-finiteness of
the generating function of ${\mathfrak{S}}$-excursions $F_{\mathfrak{S}}(0,0,t)$ in the 51 cases
of nonsingular small-step walks with infinite group.
\end{quote}

\subsection{Number theory}
\label{sec:number_theory}

It is classical that, in many cases, transcendence of a complex function
can be recognized by simply looking at the local behavior around its
singularities, or equivalently at the asymptotic behavior of its Taylor
coefficients. This is a consequence of the Newton-Puiseux theorem and of
transfer theorems based on Cauchy's integral formula, see, \eg, \cite[\S
3]{Flajolet87} and~\cite[Ch.~VII.7]{FlSe09}. For instance, if $(a_n)_{n \geq
0}$ is a sequence whose asymptotic behavior has the form $K \cdot \rho^n
\cdot n^\alpha$ where either the \emph{growth constant}~$\rho$ is
transcendental, or the \emph{singular exponent}~$\alpha$ is irrational or a
negative integer, then the generating function $A(t) = \sum_{n \geq 0} a_n t^n$
is not algebraic.

A direct application of this criterion and our result on the irrationality of~$\alpha$ allows to show
that the generating function for ${\mathfrak{S}}$-excursions in the 51 cases of nonsingular
walks with infinite group is \emph{transcendental}. (This is of course also a consequence of their being non-D-finite.)

Similar (stronger, though less known) results, originating from the
arithmetic theory of linear differential equations, also allow to detect
non-D-finiteness of power series by using asymptotics of their coefficients. This is
a consequence of the theory of G-functions~\cite{Andre89,DwGeSu94}, introduced
by Siegel almost a century ago in his work on diophantine approximations~\cite{Siegel1929}.

We will only use a corollary of this theory, which is well-suited to
applications in combinatorics. This result is more or less classical, but we could not find its exact statement in the literature.

\begin{thm} \label{theo:arithmetic} Let $(a_n)_{n \geq 0}$ be an
integer-valued sequence whose $n$-th term $a_n$ behaves asymptotically like $K \cdot \rho^n \cdot n^\alpha$, for some real constant $K>0$. If the
\emph{growth constant}~$\rho$ is transcendental, or if the \emph{singular
exponent}~$\alpha$ is irrational, then the generating function $A(t) = \sum_{n
\geq 0} a_n t^n$ is not D-finite. \end{thm}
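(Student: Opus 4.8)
The plan is to deduce Theorem~\ref{theo:arithmetic} from the structure theory of G-functions, by reducing to the well-known constraints on the asymptotics of D-finite sequences that are globally bounded (in the sense of having a common denominator for the coefficients and a finite radius of convergence). First I would observe that an integer-valued sequence $(a_n)_{n\ge 0}$ with $a_n \sim K\rho^n n^\alpha$ is, in particular, exponentially bounded, so $\rho$ must be a positive real number $\geq 1$ if the sequence is not eventually zero, and $A(t)=\sum a_n t^n$ has radius of convergence $1/\rho$; if $A(t)$ were D-finite then it would be a D-finite power series with integer coefficients and positive finite radius of convergence. Such a series is a G-function: by a theorem of André (building on work recorded in~\cite{DwGeSu94,Andre89}), any D-finite power series in $\mathbb{Q}[[t]]$ with nonzero finite radius of convergence whose coefficients have a bounded common denominator --- automatic here since the $a_n$ are integers --- satisfies a linear ODE whose only singularities are regular and whose exponents at each singularity are rational numbers. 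This is the arithmetic input encoded in the references~\cite{DwGeSu94,Andre00,Garoufalidis08}.

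Next I would run the standard singularity analysis of~\cite[Ch.~VII]{FlSe09} in reverse: because $A(t)$ is a G-function it has finitely many singularities on its circle of convergence $|t|=1/\rho$, each of them regular singular with rational exponents, so the transfer theorems give an asymptotic expansion of $a_n$ as a finite $\mathbb{C}$-linear combination of terms of the form $\omega^n n^{\beta} (\log n)^k$ with $|\omega|=\rho$, $\beta\in\mathbb{Q}$, and $k\in\mathbb{N}$. (One must be slightly careful because a regular singular point can a priori produce a dominant term $n^{-1}$, $n^{-2}$, etc., or logarithmic terms, but these are all of the stated shape.) Comparing this with the hypothesis $a_n\sim K\rho^n n^\alpha$ and isolating the dominant term, the growth rate $\rho$ must equal $|\omega|$ for the dominating $\omega$, and since $\omega$ is an algebraic number appearing as a singularity of an ODE with coefficients in $\mathbb{Q}(t)$, $\rho=|\omega|$ is itself algebraic; this gives the contrapositive of the ``$\rho$ transcendental'' clause. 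For the ``$\alpha$ irrational'' clause, the same comparison forces $\alpha$ to be the exponent $\beta\in\mathbb{Q}$ controlling the dominant contribution (the real part of a shifted rational exponent is rational), contradicting irrationality of $\alpha$.

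I expect the main obstacle to be stating precisely, and citing correctly, the arithmetic theorem that a D-finite integer sequence with a finite nonzero radius of convergence has an associated minimal ODE with only regular singularities and rational exponents --- in other words, that its generating function is a G-function in André's sense and that G-functions have quasi-unipotent local monodromy with rational exponents. The subtle points are: (i) going from ``integer coefficients'' to ``G-function'' requires the Chudnovsky--André theorem that a globally bounded D-finite series is a G-function; (ii) extracting rationality of the singular exponent $\alpha$ from rationality of the local exponents requires knowing that logarithmic and lower-order regular-singular contributions cannot conspire to produce an irrational power of $n$, which is clear once one writes the local solution basis but deserves a sentence; and (iii) one should handle the degenerate case $a_n=0$ eventually (then $A(t)$ is a polynomial, trivially D-finite, but then no asymptotic of the form $K\rho^n n^\alpha$ with $K>0$ holds, so the hypothesis excludes it). Everything else is a routine application of the transfer theorems of~\cite{FlSe09}, so the write-up should be short, with the bulk of the work being an accurate citation of~\cite{Andre89,DwGeSu94,Andre00,Garoufalidis08}.
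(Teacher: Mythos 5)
Your proposal is correct and follows essentially the same route as the paper: reduce to the G-function setting via the Chudnovsky--Andr\'e theorem, invoke Katz's result that global nilpotence forces regular singularities with rational exponents, and transfer back to coefficient asymptotics to contradict irrationality of $\alpha$ (resp.\ transcendence of $\rho$). The only cosmetic difference is that the paper obtains algebraicity of $\rho$ and $\alpha$ for general D-finite series directly from Birkhoff--Trjitzinsky and Turrittin, whereas you derive $\rho$'s algebraicity from the G-function structure itself; both are valid.
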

%\begin{proof}
Classical results by
Birkhoff-Trjitzinsky~\cite{BiTr32} and Turrittin~\cite{Turrittin55} imply that
if the $n$-th coefficient of a D-finite power series is asymptotic to $K \cdot \rho^n \cdot n^\alpha$, then $\rho$ and $\alpha$ are
necessarily algebraic numbers.

The difficult part of Theorem~\ref{theo:arithmetic} is that irrationality of
the singular exponent implies non-D-finiteness, \emph{under the integrality
assumption} on the coefficients. The only proof that we are aware of uses the
fact that any D-finite power series with integer-valued and exponentially
bounded coefficients is a \emph{G-function}. It relies on the combination of
several strong arithmetic results. First, the Chudnovsky-Andr\'e
theorem~\cite{ChCh85,Andre89} states that the minimal order linear differential
operator satisfied by a G-function is \emph{globally nilpotent}. Next, Katz's
theorem~\cite{Katz70} shows that the global nilpotence of a differential operator
implies that all of its singular points are \emph{regular singular} points
with \emph{rational exponents}.

We refer to~\cite{DwGeSu94} for more details on this topic, and
to~\cite{Garoufalidis08} for a brief and elementary account.
%\end{proof}

\subsection{Probability theory}
\label{subsec:Probability}
Consider a walk starting from the origin such that, at each unit time, a jump is chosen uniformly at random in~$\mathfrak S$, independently of the previous steps. Let then~$\tau$ denote the first time when the boundary of the translated positive quarter plane $(\{-1\}\cup \mathbb N)^2$ is reached. If $(X_1(k),X_2(k))_{k\geq 1}$ denote the coordinates of the successive positions of the walk, then our enumeration problem is related to probability in a simple way:
\begin{equation}
\label{counting-probab}
\mathbb P\!\left[\sum_{k=1}^n (X_1(k),X_2(k)) = (i,j),\,\tau> n\right]=\frac{f_\mathfrak S(i,j,n)}{\vert\mathfrak S \vert^n}.
\end{equation}
With an appropriate scaling of both time and space, one gets, at first order, a continuous analogue of the walk, the Brownian motion. Using known results on the random walks in a cone and a refined analysis of the approximation by the Brownian motion, Denisov and Wachtel \cite{DeWa13} have obtained a precise asymptotic estimate of the probability in Eq.~\eqref{counting-probab}.

They make the hypothesis that the random walk is irreducible in the cone, which translates in our setting into the 
\emph{nondegeneracy} of the walk: for all $(i,j)\in\mathbb{N}^2$, the set~$\{n\in\mathbb{N}\mid f_{\mathfrak S}(i,j,n)\neq0\}$ is nonempty; furthermore, the walk is said to be \emph{aperiodic} when the gcd of the elements of this set is~1 for all $(i,j)$. Otherwise, it is \emph{periodic} and this gcd is the period.
We now state their result in a way that can be used directly in our computations.
%\nocite{Varopoulos1999}
\begin{thm}[Denisov \& Wachtel \cite{DeWa13}] \label{theo:proba}
Let $\mathfrak{S} \subseteq \{0,\pm 1 \}^2$ be the step set of a walk in the quarter plane~$\mathbb{N}^2$, which is not contained in a half-plane.

Let $e_n = e^{\mathfrak{S}}_n$ denote the number of ${\mathfrak{S}}$-excursions of length $n$ using only steps in~$\mathfrak{S}$, and 
let $\chi = \chi_{\mathfrak{S}}$ denote the characteristic polynomial $\sum_{(i,j)\in\mathfrak{S}}x^{i}y^{j} \in \mathbb{Q}[x,x^{-1},y,y^{-1}]$ of the step set~$\mathfrak{S}$. 
Then, the system 
\begin{equation}
\label{eq:system}
     \frac{\Dx \chi}{\Dx x} = \frac{\Dx \chi}{\Dx y} = 0
\end{equation} 
has a unique solution $(x_0,y_0) \in \mathbb{R}_{> 0}^2$.
Next, define
\begin{equation}
\label{eq:definition-c}
\rho := \chi(x_0,y_0),\qquad     c := \frac{\frac{\Dx^2 \chi}{\Dx x \Dx y}}{\sqrt{\frac{\Dx^2 \chi}{\Dx x^2} \cdot \frac{\Dx^2 \chi}{\Dx y^2}}}(x_0, y_0), \qquad \alpha := -1 - \pi/\arccos(-c).
\end{equation}
Then, there exists a constant~$K>0$, which depends only on~$\mathfrak{S}$, such that:
\begin{itemize}
\item[--] if the walk is aperiodic,
\[ e_n \sim K \cdot \rho^n \cdot n^\alpha, \] 
\item[--] if the walk is periodic (then of period $2$),
\[  e_{2n} \sim K \cdot \rho^{2n} \cdot (2n)^\alpha,\quad e_{2n+1} = 0.\] 
\end{itemize}
\end{thm}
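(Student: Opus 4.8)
The plan is to follow the Denisov--Wachtel analysis of random walks in a cone, specialised to our small-step setting, and to translate it into the concrete algebraic statement above. First I would recall the general theorem of \cite{DeWa13}: for a random walk in a convex cone, with zero-drift increments after a suitable linear change of coordinates, the probability of first return to the apex at time $n$ decays like $C \cdot n^{-p}$, where the exponent $p$ is determined by the first eigenvalue of the Laplace--Beltrami operator on the spherical section of the transformed cone. The transformation is the one that makes the covariance matrix of the increments equal to the identity; after it, the quarter plane becomes a wedge whose opening angle $\beta$ controls the exponent, and in two dimensions the relevant eigenvalue computation is explicit, yielding $p = \pi/\beta$. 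The subexponential factor $n^\alpha$ in $e_n$ then comes from combining this $n^{-p}$ with the $|\mathfrak S|^n$ normalisation in Eq.~\eqref{counting-probab} and the exponential growth rate of $e_n$ itself.

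The key steps, in order, are as follows. (i) Identify the exponential growth rate: $e_n^{1/n} \to \rho$, where $\rho$ is the minimum over $\mathbb{R}_{>0}^2$ of the Laurent polynomial $\chi(x,y)$. Since $\chi$ is a Laurent polynomial with positive coefficients that is not supported in a half-plane, $\log\chi(e^{u},e^{v})$ is strictly convex and coercive on $\mathbb{R}^2$, so it has a unique critical point, which gives the unique positive solution $(x_0,y_0)$ of the system~\eqref{eq:system}; this is a standard saddle-point/large-deviations fact. (ii) Tilt the step distribution by the weights $(x_0^i y_0^j)/\rho$ to obtain a probability distribution on $\mathfrak S$ with mean zero (this is exactly the vanishing of the first derivatives), and compute its covariance matrix $\Sigma$; its entries are the second derivatives of $\chi$ at $(x_0,y_0)$ up to normalisation, which is why the correlation coefficient $c$ in Eq.~\eqref{eq:definition-c} has that precise form. (iii) Apply the linear map $\Sigma^{-1/2}$ that whitens the covariance: the quarter plane is sent to a cone whose half-angle $\beta$ satisfies $\cos\beta = -c$ (the sign coming from the fact that whitening an acute/obtuse first-quadrant pair of axes with correlation $c$ produces an angle with cosine $-c$ between the transformed boundary rays). (iv) Quote \cite{DeWa13} for the walk in this cone: the return probability is asymptotic to a constant times $n^{-\pi/\beta - 1}$ in the aperiodic case, with the extra $-1$ being the standard local-limit correction for returning to a fixed point rather than staying in the cone. (v) Assemble: $e_n = |\mathfrak S|^n \cdot \mathbb{P}[\dots] \sim K \rho^n n^{-\pi/\beta - 1}$, and since $\beta = \arccos(-c)$ this is exactly $K \rho^n n^\alpha$ with $\alpha = -1 - \pi/\arccos(-c)$. (vi) Handle periodicity: a small-step nondegenerate walk not in a half-plane is either aperiodic or has period exactly $2$ (the lattice generated by $\mathfrak S$ modulo the "parity" sublattice is either trivial or index $2$), giving the dichotomy stated; in the period-$2$ case one runs the same argument for the walk observed at even times.

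The main obstacle, as I see it, is step (iii)--(iv): correctly extracting from the general (arbitrary-dimension, arbitrary-cone) Denisov--Wachtel estimate the precise form of the exponent in our two-dimensional wedge, including the correct additive constant for the excursion (return) probability as opposed to the survival probability, and verifying that the whitening transformation really does send the first quadrant to a wedge of half-angle $\arccos(-c)$. One must be careful that the covariance matrix $\Sigma$ is genuinely positive definite --- this uses that the tilted distribution is not supported on a line, equivalently that $\mathfrak S$ is not contained in a half-plane through the origin, which is precisely the hypothesis of the theorem --- so that $\Sigma^{-1/2}$ is well defined and the image cone is a proper $2$-dimensional wedge with $0 < \beta < \pi$, hence $-1 < \alpha < 0$ never an issue for the arccos. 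The rest (uniqueness of $(x_0,y_0)$, the convexity argument, the explicit covariance computation, the period dichotomy) is routine and will be dispatched with the standard convex-analytic and lattice arguments sketched above. I would also note that the positivity $K>0$ and its dependence only on $\mathfrak S$ are inherited directly from \cite{DeWa13} together with the fact that $e_n$ is not eventually zero (nondegeneracy), so no separate argument is needed there.
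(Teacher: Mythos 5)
Your proposal follows essentially the same route as the paper: exponential tilting via the unique positive critical point of $\chi$ (existence by coercivity, uniqueness by convexity), whitening of the covariance so that the quarter plane becomes a wedge of opening $\arccos(-c)$, and then invoking the Denisov--Wachtel survival-probability and local-limit results to get the exponent $-1-\pi/\arccos(-c)$. The only cosmetic difference is that you argue the period dichotomy by a general lattice argument while the paper simply inspects the finitely many periodic cases in its tables; both are fine.
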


\begin{proof}
The theorem is not stated explicitly under this form by Denisov and Wachtel. Following the discussion in their \S1.5, we now review how this result is a consequence of their theorems. Given a random walk~$X$ starting at the origin and with each step drawn uniformly at random in~$\mathfrak{S}$, the result is obtained by a succession of normalizations. These normalizations are illustrated in Figure~\ref{fig-walks} on the step set of Example~23 of Table~\ref{tab:2d}, namely $\mathfrak{S}=\{(-1,0),(0,1),(1,0),(1,-1),(0,-1)\}$. 

\begin{figure}
\includegraphics[height=4cm]{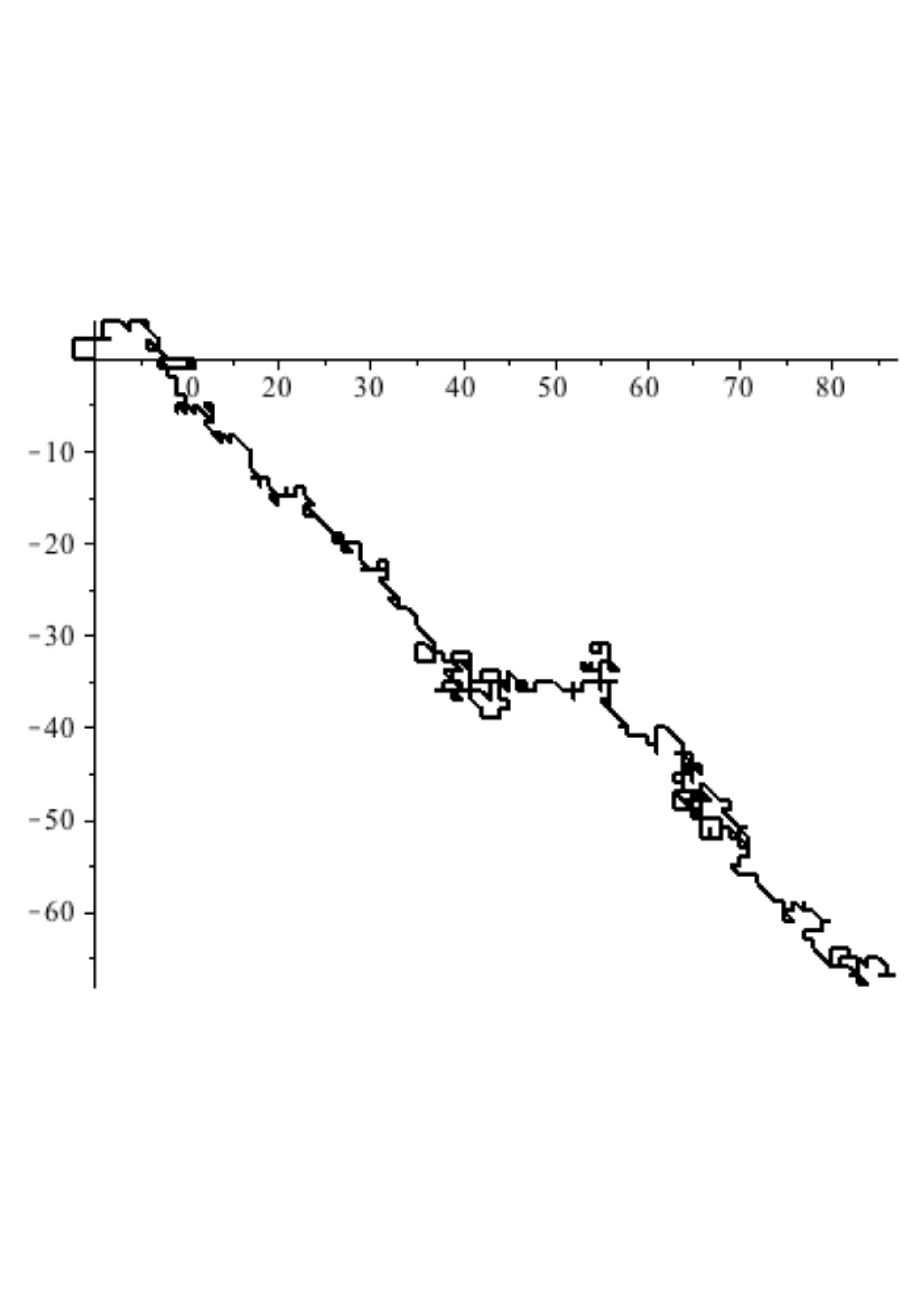}
\includegraphics[height=5cm]{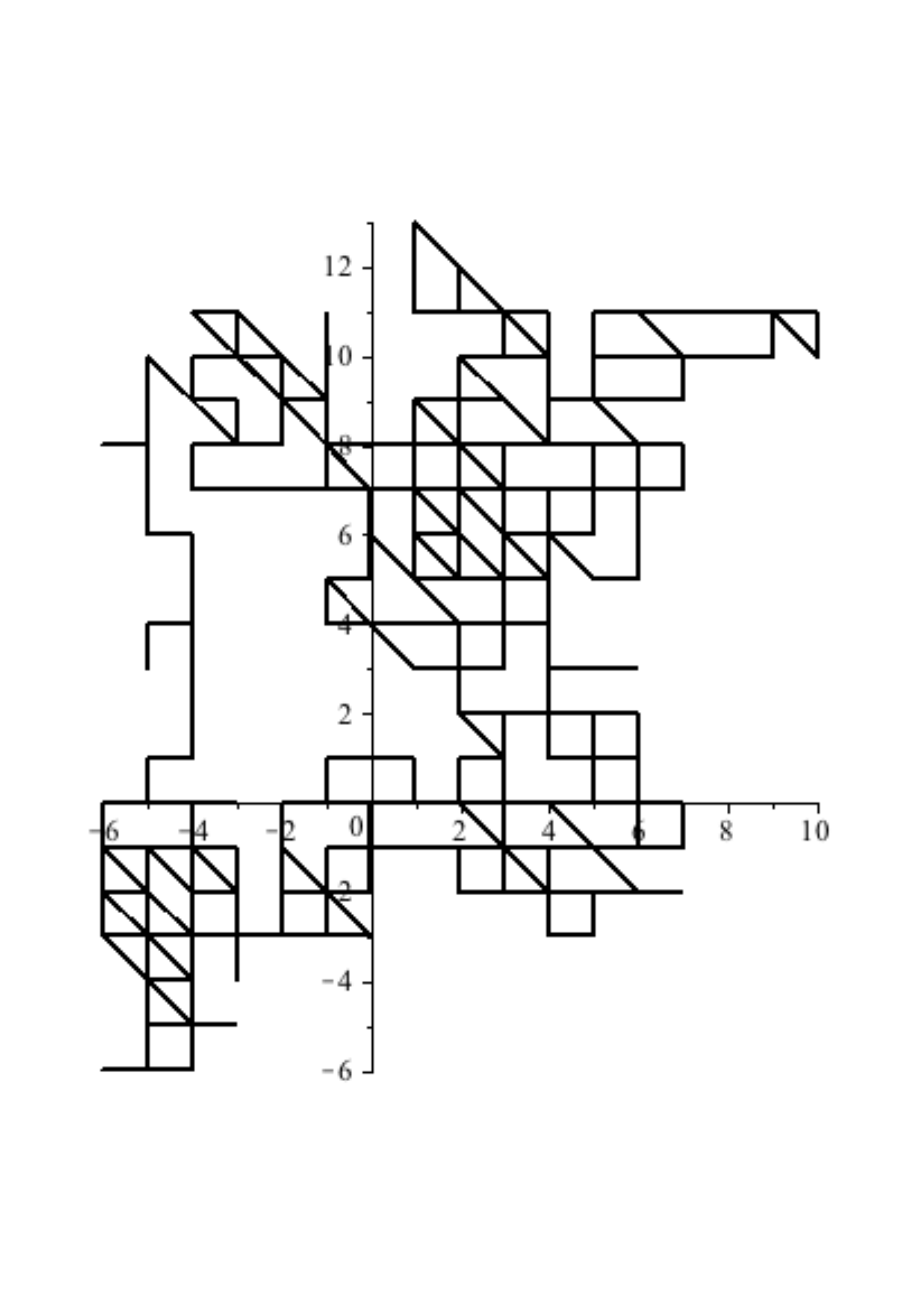}
\includegraphics[height=5cm]{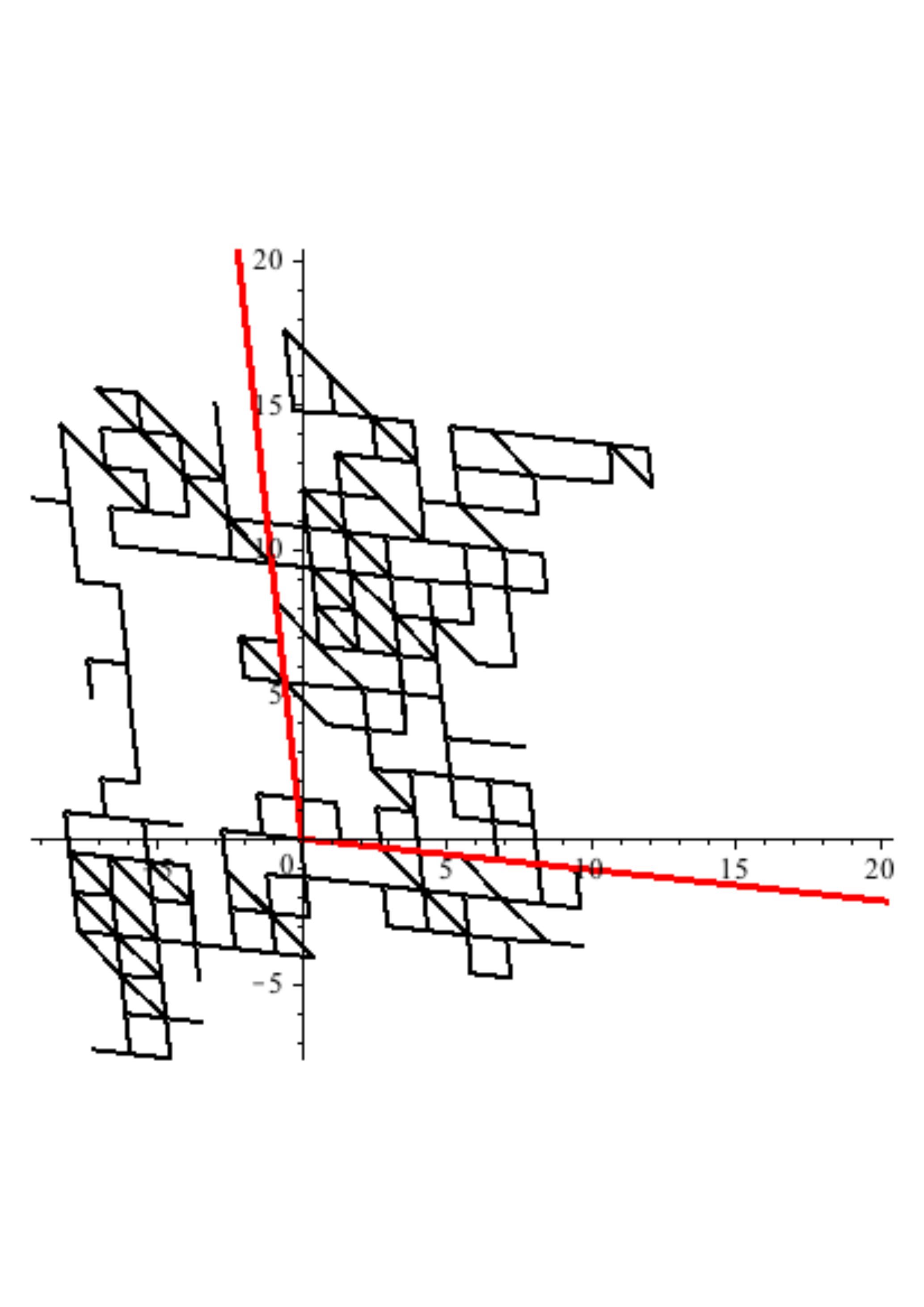}
\caption{Five hundred steps with $\mathfrak{S}=\{(-1,0),(1,0),(0,-1),(0,1),(1,-1)\}$.
{\small Random walk~$X$ with steps drawn uniformly from $\mathfrak{S}$ (left); random walk~$Y$ with steps drawn from $\mathfrak{S}$ with probabilities~$x_0^iy_0^j/\chi(x_0,y_0)$ (middle); random walk~$Z$ obtained by decorrelating~$Y$, with the cone $M(\mathbb N^2)$ (right).}}
\label{fig-walks}
\end{figure}
 
\medskip
\paragraph{\em Drift and weights} The first step is to reduce to the case of a random walk $Y$ with no drift (\ie, $\mathbb{E}[(Y_1(k),Y_2(k))]=(0,0)$ for all~$k$, where $Y_1$ and $Y_2$ are the coordinates of~$Y$).
This is achieved by giving different weights to each step: a weight~$x_0>0$ to the East direction, $1/x_0$ to the West direction and simultaneously~$y_0>0$ and~$1/y_0$ to the North and South directions. Each step~$(i,j)\in\mathfrak S$ is then given probability~$x_0^iy_0^j/\chi(x_0,y_0)$. 
Finally, $x_0$ and $y_0$ are fixed by the condition $\EE[(Y_1(k),Y_2(k))]=(0,0)$. (This is a special case of the Cram\'er transform, see \cite{Azencott1980}.) By differentiation with respect to~$x$ (resp.~$y$), the expectations are obtained as
\[\EE[Y_1(k)]=\frac{x_0}{\chi(x_0,y_0)}\frac{\partial\chi}{\partial x}(x_0,y_0),\quad
\EE[Y_2(k)]=\frac{y_0}{\chi(x_0,y_0)}\frac{\partial\chi}{\partial y}(x_0,y_0).
\]
A correct choice of~$(x_0,y_0)$ is therefore given by a positive solution to Eq.~\eqref{eq:system}.

Since the step set of the walk is not confined to the right half-plane, the limit of~$\chi(x,y)$ as $x\rightarrow0+$ is infinite, similarly for~$y\rightarrow0+$ and for $x$ or $y$ tending to~$+\infty$. This proves the existence of a solution. Its uniqueness comes from the convexity of~$\chi$, a Laurent polynomial with positive coefficients.

This new random walk~$Y$ is related to the original one: by induction on the number of steps,
\[\mathbb P\!\left[\sum_{k=1}^n (Y_1(k),Y_2(k)) = (i,j),\,\tau> n\right]=
x_0^iy_0^j\frac{|\mathfrak S|^n}{\chi(x_0,y_0)^n}\,
\mathbb P\!\left[\sum_{k=1}^n (X_1(k),X_2(k)) = (i,j),\,\tau> n\right],\]
where we use the same letter~$\tau$ to denote the exit times of~$X$ and~$Y$ from~$\mathbb{N}^2$.
In view of Eq.~\eqref{counting-probab}, the number of walks can be read off the new walk as
\[f_{\mathfrak S}(i,j,n)=\frac{\rho(x_0,y_0)^n}{x_0^i y_0^j}\,\mathbb P\!\left[\sum_{k=1}^n (Y_1(k),Y_2(k)) = (i,j),\,\tau> n\right].
\]

\medskip
\paragraph{\em Covariance and scaling} The second step is to reduce to the case of a random walk~$Z$ with no drift and no correlation, \ie, whose covariance matrix $\Cov(Z)=(\mathbb{E}[Z_iZ_j])_{i,j}$ is the identity matrix. 

The covariance matrix can be obtained from the characteristic polynomial again. Simple computations lead to
\[\Cov(Y)=\frac{1}{\chi(x_0,y_0)}\begin{pmatrix}x_0^2\frac{\partial^2\chi}{\partial x^2}(x_0,y_0)&x_0y_0\frac{\partial^2\chi}{\partial x\partial y}(x_0,y_0)\\
x_0y_0\frac{\partial^2\chi}{\partial x\partial y}(x_0,y_0)&y_0^2\frac{\partial^2\chi}{\partial y^2}(x_0,y_0)
\end{pmatrix}.
\]

One way to compute the appropriate scaling is in two steps. First, define a new walk obtained from~$Y$ by~$(W_1,W_2)=(Y_1/\sqrt{\mathbb{E}[Y_1^2]},Y_2/\sqrt{\mathbb{E}[Y_2^2]})$. By a direct computation, the walk~$W$ has no drift and satisfies
\[\mathbb{E}[W_1^2]=\mathbb{E}[W_2^2]=1,\quad
\mathbb{E}[W_1W_2]=\frac{\frac{\partial^2\chi}{\partial x\partial y}}
{\sqrt{\frac{\partial^2\chi}{\partial x^2}\frac{\partial^2\chi}{\partial y^2}}}(x_0,y_0)=c.\]
By the Cauchy-Schwarz inequality, the correlation coefficient~$c$ belongs to~$[-1,1]$.

Finally, an uncorrelated walk is obtained by modifying the directions of the steps, defining a new walk $Z=MW$. We must have $\text{Cov}(Z)$ equal to the identity matrix. Since $\text{Cov}(Z)=M\text{Cov}(W)M^\intercal$ and since $\text{Cov}(W)$, as a covariance matrix, is symmetric positive-definite, we can find a diagonal matrix $D$ with positive diagonal entries and an orthonormal matrix $P$ such that $ \text{Cov}(W)=PD{}P^\intercal$. The matrix $A=P{D}^{1/2}P^\intercal$ therefore satisfies $AA^\intercal=\text{Cov}(W)$, and the choice $M=A^{-1}=P{D}^{-1/2}P^\intercal$ is suitable.

In our case, we have
\begin{equation*}
     \text{Cov}(W)=\left(\begin{array}{cc}1&c\\c&1\end{array}\right),
     \quad P=P^\intercal=P^{-1}=\frac{1}{\sqrt{2}}\left(\begin{array}{cr}1&1\\1&-1\end{array}\right),
     \quad D=\left(\begin{array}{cc}{1+c}&0\\0&{1-c}\end{array}\right).
\end{equation*}
We deduce that
\begin{equation*}
     M=A^{-1}=P{D}^{-1/2}P=\frac{1}{2\sqrt{1-c^2}}\left(\begin{array}{cc}\sqrt{1+c}+\sqrt{1-c}&\sqrt{1-c}-\sqrt{1+c}\\\sqrt{1-c}-\sqrt{1+c}&\sqrt{1+c}+\sqrt{1+c}\end{array}\right).
\end{equation*}
Choosing $c=\sin(2\phi)$ and using easy trigonometric identities, we conclude that
\begin{equation*}
     M=\frac{1}{\sqrt{1-c^2}}\left(\begin{array}{cr}\cos(\phi)&-\sin(\phi)\\-\sin(\phi)&\cos(\phi)\end{array}\right).
\end{equation*}

Now, the excursions of these walks are related by
\begin{align*}
\mathbb P\!\left[\sum_{k=1}^n (Y_1(k),Y_2(k)) = (0,0),\,\tau> n\right]
&=\mathbb P\!\left[\sum_{k=1}^n (W_1(k),W_2(k)) = (0,0),\,\tau> n\right],\\
&=\mathbb P\!\left[\sum_{k=1}^n (Z_1(k),Z_2(k)) = (0,0),\,\tau> n\right],
\end{align*}
where the same letter $\tau$ is used to denote the first exit times, first for the walk~$Y$ from~$\mathbb{N}^2$, next for the walk~$W$ from~$\mathbb{N}^2$  and finally for the walk~$Z$ from the cone~$M(\mathbb{N}^2)$, whose opening is $\arccos(-\sin2\phi)=\arccos(-c).$ (See Figure~\ref{fig-walks}.)

%%%%%%%%%%%%%%%%%%%%%%%%%%%%%%%%%%%%%%%%%%%%%%%%%%%%%%%%%%%%%%%%%%%%%%%%%%%%%%%%%%%%%%%%%%%%%%%%%%%%%%%%%%%%%%%%%%%%%%%%%%%%%%%%%%%%%%%%%%%%%%%%

\medskip
\paragraph{\em Asymptotic behavior of exit times}
In these conditions, the result of Denisov and Wachtel~\cite[Theorem~6]{DeWa13}
proves in great generality that the exit time behaves like the exit time of
the Brownian motion from that same cone. This is a classical topic of probability theory, in arbitrary dimension~\cite{Spitzer64}. We content ourselves with sketching how the exponent~$\pi/\arccos(-c)$ comes into play and refer to the literature for details. Our description follows closely that of DeBlassie~\cite{DeBlassie87} and Ba\~nuelos and Smits~\cite{BaSm97} that we make explicit in our special case.

The probability~$g(x,y,t)=\mathbb P_{(x,y)}[\tau\ge t]$ that a Brownian motion starting at~$(x,y)$ inside the cone is still inside the cone at time~$t$ obeys a diffusion equation
\[\left(\frac\partial{\partial t}-\frac{1}{2}\Delta\right)g(x,y,t)=0,\]
where $\Delta$ denotes the Laplacian, with $g(x,y,0)=1$ inside the cone and $g(x,y,t)=0$ for $t\ge 0$ on its border. Intuitively, this can be seen as the limit of the discrete recurrence for the probabilities of the random walk. It is then natural to pass to polar coordinates~$(r,\theta)$. 
By a classical homogeneity property of the Brownian motion, the solution is actually a function of~$t$ and~$s=t/r^2$, which implies an extra equation
$(\partial/{\partial t}+({r}/{2t})\partial/{\partial r})g(r,\theta,t)=0.$
Changing the variables into~$u(s,\theta)=g(r,\theta,t)$ finally leads to
\[\left(L_s+\frac{\partial^2}{\partial\theta^2}\right)u(s,\theta)=0,\quad\text{where}\quad L_s=s^2\frac{\partial^2}{\partial s^2}+2(2s-1)\frac\partial{\partial s}\]
with boundary conditions $u(0,\theta)=1$ for $\theta$ inside the cone and $u(s,\theta)=0$ for $s\ge0$ on its border. This problem is solved by the method of separation of variables: if a solution can be written $A(s)B(\theta)$, then 
$L_s(A(s))/A(s)=-B''(\theta)/B(\theta)$.
The left-hand side depends only on~$s$ and the right-hand side only on~$\theta$ and thus they are both equal to a constant~$\lambda$. In particular $B''(\theta)+\lambda B(\theta)=0$ with boundary conditions~$B(0)=B(\arccos(-c))=0$ forces $\lambda$ to be of the form~$\mu_k^2=(k\pi/\arccos(-c))^2$, $k\in\mathbb{N}\setminus\{0\}$ with corresponding solution~$\sin(\theta\mu_k)$. To each such~$\lambda_k=\mu_k^2$ corresponds a solution of the left-hand side in terms of a hypergeometric series ${}_1F_1$ (see~\cite[\S16.11]{NIST:DLMF}), namely
\[A_k(s)=(2s)^{-\mu_k/2}{}_1F_1\left(\mu_k/2,\mu_k+1,-1/(2s)\right),\quad
\text{with}\quad
\lim_{s\rightarrow0+}A_k(s)=\frac{2^{\mu_k}}{\sqrt{\pi}}\Gamma\!\left(\frac{\mu_k+1}{2}\right).\]
By completeness of the set of eigenfunctions (or Fourier expansion in that case), the solution of the diffusion equation therefore writes as a linear combination
\[g(r,\theta,t)=\sum{c_k\sin(\mu_k\theta)A_k(t/r^2)}.\]
The coefficients~$c_k$ are then given by the Fourier expansion of the boundary condition~$u(0,\theta)=1$.
Thus, finally, the desired probability~$g(x,y,t)$ has leading term 
in~$t^{-\mu_1/2}=t^{-\pi/(2\arccos(-c))}$ as $t\rightarrow\infty$.
The relation to the discrete random walk due to Denisov and Wachtel then gives 
\[\mathbb{P}_{(x,y)}[\tau\ge n]\sim \kappa n^{-\pi/(2\arccos(-c))}, \]
for some constant~$\kappa$ depending on~$x$ and~$y$. % and the exponent is obtained as an eigenvalue of the Laplacian on an arc of circle.

\medskip
\paragraph{\em Local limit theorem}
The conclusion is now explicitly in Denisov and Wachtel~\cite[Theorem~6 and Lemma~13]{DeWa13}.

\medskip
\paragraph{\em Periodic step sets}  An examination of the different cases of periodic step sets (they are marked with a star in Tables \ref{tab:2d} and \ref{tab:2d-ter}) shows that the period is necessarily $2$. In particular, we have $e_{2n+1} = 0$ for all $n\geq 0$. However, a reduction to the previous case is obtained by changing the step set into~$\mathfrak{S}+\mathfrak{S}$ and $n$ into~$n/2$.
\end{proof}

\subsection{Algorithmic irrationality proof}
\label{subsec:algorithmic} 

Let $\mathfrak{S} \subseteq \{0,\pm 1 \}^2$ be one of the 51 nonsingular step
sets with infinite group (see Table~\ref{tab:2d} in
Appendix~\ref{sec:appendix}). By Theorem~\ref{theo:proba}, the singular
exponent $\alpha$ in the asymptotic expansion of the excursion sequence
$(e^{\mathfrak{S}}_n)_{n\geq 0}$ is equal to $-1 - \pi/\arccos(-c)$, where $c$
is an algebraic number. Therefore, if $\arccos(c)/\pi$ is an
irrational number, then by Theorem~\ref{theo:arithmetic}, the generating
function $F_{\mathfrak{S}}(0,0,t)$ is not D-finite.

We now explain how, starting from the step set $\mathfrak{S}$ one can
\emph{algorithmically} prove that $\arccos(c)/\pi$ is irrational. This
effective proof decomposes into two main steps, solved by two different
algorithms. The first algorithm computes the minimal polynomial $\mu_c(t) \in
\mathbb{Q}[t]$ of~$c$ starting from $\mathfrak{S}$. The second one
performs computations on $\mu_c(t)$ showing that $\arccos(c)/\pi$ is
irrational.

\subsubsection{Computing the minimal polynomial of the correlation coefficient} Given $\chi = \chi_{\mathfrak{S}}$ the characteristic
polynomial of the step set~$\mathfrak{S}$, Theorem~\ref{theo:proba} shows that the exponential growth~$\rho$ and the correlation coefficient~$c$ are algebraic numbers, for which equations can be obtained by eliminating~$x$ and~$y$ from the algebraic equations
\[\frac{\partial\chi}{\partial x}=0,\qquad
\frac{\partial\chi}{\partial y}=0,\qquad
\rho-\chi=0,\qquad
c^2-\frac{\left({\frac{\Dx^2 \chi}{\Dx x
\Dx y}}\right)^2}{\frac{\Dx^2 \chi}{\Dx x^2} \cdot \frac{\Dx^2 \chi}{\Dx
y^2}}=0.
\]

This elimination is a routine task in effective algebraic geometry, usually performed with Gr\"obner bases for lexicographic or elimination orders~\cite{CoxLittleOShea1997}. 
Let $\chi_x$ and $\chi_y$ denote respectively the numerators of $\frac{\Dx \chi}{\Dx x}$ and of $\frac{\Dx \chi}{\Dx y}$. The solutions of~$\chi_x=\chi_y=0$ contain the solutions of~$\frac{\Dx \chi}{\Dx x}=\frac{\Dx \chi}{\Dx y}=0$, but may also contain spurious solutions at~$x=0$ or~$y=0$ provoked by the multiplication by powers of~$x$ and~$y$. These are removed by introducing a new variable~$u$ and considering the zero-dimensional ideal $I$ of~$\Q[x,y,u]$ generated by $(\chi_x,\chi_y,1-uxy)$.

For any zero~$(x_0,y_0)$ of the system~$\chi_x(x_0,y_0)=\chi_y(x_0,y_0)=0$ and any polynomials~$P(x,y)$ and~$Q(x,y)$ such that~$Q\not\in I$, the algebraic number~$P(x_0,y_0)/Q(x_0,y_0)$ is a root of a generator of the ideal~$I+\langle P(x,y)-tQ(x,y)\rangle\cap\mathbb{Q}[t]$. This can be used to compute annihilating polynomials for~$\rho$ and~$c$.

This computation is summarized in the following algorithm. 
\bigskip
\begin{itemize}
\item[{\sf Input:}] A step set $\mathfrak{S}$ satisfying the assumptions of Theorem~\ref{theo:proba}
\item[{\sf Output:}] The minimal polynomials of $\rho$ and $c$ defined in Theorem~\ref{theo:proba}
\item[]

\item[(1)] Set $\chi(x,y):= \sum_{(i,j)\in\mathfrak{S}}x^{i}y^{j}$, and
compute $\chi_x:= \textsf{numer}(\frac{\Dx \chi}{\Dx x})$, $\chi_y:=
\textsf{numer}(\frac{\Dx \chi}{\Dx y})$. 

\item[(2)] Compute the Gr\"obner basis of the ideal generated in $\mathbb{Q}[x,y,t,u]$ by $(\chi_x,\chi_y, \textsf{numer}(t-\chi),1-uxy)$  for a term order that eliminates~$x,y$ and~$u$. Isolate the unique polynomial in this basis that depends only on~$t$,
factor it, and identify its factor
$\mu_\rho$ that annihilates $\rho$.

\item[(3)] Compute the polynomial
\[P(x,y,t):=\textsf{numer}\left(t^2-\frac{\left({\frac{\Dx^2 \chi}{\Dx x
\Dx y}}\right)^2}{{\frac{\Dx^2 \chi}{\Dx x^2} \cdot \frac{\Dx^2 \chi}{\Dx
y^2}}}\right)\]
and eliminate~$x,y$ and~$u$ by computing a Gr\"obner basis of the ideal generated in $\mathbb{Q}[x,y,t,u]$ by~$(\chi_x,\chi_y,P,1-uxy)$ for a term order that eliminates~$x,y$ and $u$. Isolate the unique polynomial in this basis that depends only on~$t$,
factor it, and identify its factor
$\mu_c$ that annihilates $c$.

\end{itemize}
The identification of the proper factor is achieved for instance by computing a rough approximation of the numerical values of~$\rho$ and~$c$ using Eq.~\eqref{eq:definition-c} and comparing with the numerical roots of the factors. If needed, the numerical values can then be refined to arbitrary precision using the correct factor.

Table~\ref{tab:2d-ter} in Appendix~\ref{sec:appendix} displays the minimal
polynomials of $\rho$ and of $c$ obtained using this algorithm.

\subsubsection{Proving that the arccosine of the correlation coefficient is not commensurable with $\pi$}
Given the minimal polynomial~$\mu_c$ of the correlation coefficient~$c$, we now want to check that $\arccos(c)/\pi$ is irrational. General classification results exist, \eg,~\cite{Varona06}, but they are not
sufficient for our purpose. Instead, we
rather prove that $\arccos(c)/\pi$ is irrational in an algorithmic way. This
is based on the observation that if $\arccos(c)/\pi$ were rational, then~$c$ would be of the form {$(z+1/z)/2=(z^2+1)/(2z)$ with~$z$} a root of unity. This implies that
the
numerator of the rational function $\mu_c(\frac{x^2+1}{2x})$ would possess a
root which is a root of unity. In other words, the polynomial $R(x) = x^{\deg
\mu_c} \mu_c(\frac{x^2+1}{2x})$ would be divisible by a cyclotomic polynomial. This possibility can be
discarded by analyzing the minimal polynomials $\mu_c$ displayed in
Table~\ref{tab:2d-ter} in Appendix~\ref{sec:appendix}.

Indeed, in all the 51 cases, the polynomial $R(x)$ is irreducible and has
degree $2\deg(\mu_c)$, thus at most 28. Now, it is known that if the
cyclotomic polynomial $\Phi_N$ has degree at most 30, then $N$ is at most
150~\cite[Theorem~15]{RoSc62}, and the coefficients of $\Phi_N$ belong to the set
$\{-2,-1,0,1,2 \}$~\cite{Lehmer36}. Computing~$R$ in the 51 cases shows that
it has at least one coefficient of absolute value at least~3. This allows
to conclude that $R$ is not a cyclotomic polynomial, and therefore that
$\arccos(c)/\pi$ is irrational, and finishes the proof of Theorem~\ref{theo:main}.

\subsection{Example}
We now illustrate the systematic nature of our algorithms on Example~23 of Table~\ref{tab:2d}, \ie, walks with step set $\mathfrak S= \{(-1,0),(0,1),(1,0),(1,-1),(0,-1)\}$. For ease of use, we give explicit Maple instructions.

\subsubsection*{Step~1} The characteristic polynomial of the step set is
\begin{verbatim}
S:=[[-1,0],[0,1],[1,0],[1,-1],[0,-1]]:
chi:=add(x^s[1]*y^s[2],s=S);
\end{verbatim}
\[\chi:=\frac1x+\frac1y+x+y+\frac{x}y,\]
whose derivatives have numerators
\begin{verbatim}
chi_x:=numer(diff(chi,x));chi_y:=numer(diff(chi,y));
\end{verbatim}
\begin{equation}\label{eq:system-example}
\chi_x:=x^2+x^2y-y,\qquad\chi_y:=y^2-x-1.
\end{equation}
These define the system~\eqref{eq:system}.

\subsubsection*{Step~2} We now compute a polynomial that vanishes at~$\rho=\chi(x_0,y_0)$ when~$(x_0,y_0)$ is a solution of~\eqref{eq:system-example}. To this aim, we eliminate~$x$, $y$ and~$u$ in~$\{\chi_x,\chi_y,\textsf{numer}(\chi)-t\textsf{denom}(\chi),1-uxy\}$ by a Gr\"obner basis computation using an elimination order with~$(x,y,u)>t$. In Maple, this is provided by the command
\begin{verbatim}
G:=Groebner[Basis]([chi_x,chi_y,numer(t-chi),1-u*x*y],lexdeg([x,y,u],[t])):
\end{verbatim}
which returns five polynomials, only one of which is free of~$x$ and~$y$, namely
\begin{verbatim}
p:=factor(op(remove(has,G,{x,y,u})));
\end{verbatim}
\[p:=(t+1)(t^3+t^2-18t-43).\]
In this case, since we know that~$\rho>0$, we do not need to compute a numerical approximation of it using Eq.~\eqref{eq:definition-c}, but identify its minimal polynomial directly as~$\mu_\rho=t^3+t^2-18t-43$, which gives the entry in Column~3 of Table~\ref{tab:2d-ter}. The numerical value for~$\rho$ in Table~\ref{tab:2d} is given by
\begin{verbatim}
fsolve(p,t,0..infinity);
\end{verbatim}
\[4.729031538.\]
Note that in this example, the introduction of the variable~$u$ and the polynomial~$1-uxy$ are unnecessary since neither~$x=0$ nor $y=0$ are solutions of~$\chi_x=\chi_y=0$. 
\subsubsection*{Step~3}
Next, we obtain a polynomial which vanishes at~$c$ by a very similar computation:
\begin{verbatim}
G:=Groebner[Basis]([numer(t^2-diff(chi,x,y)^2/diff(chi,x,x)/diff(chi,y,y)), 
                 chi_x,chi_y,1-x*y*u],lexdeg([x,y,u],[t]));
\end{verbatim}
Again, this command returns five polynomials, with one of them free of~$x$ and~$y$, namely
\begin{verbatim}
p:=factor(op(remove(has,G,{x,y,u})));
\end{verbatim}
\[p:=(4t^2+1)(8t^3+8t^2+6t+1)(8t^3-8t^2+6t-1).\]
This polynomial has only two real roots, $\pm c$. Since~$c<0$ {(to see this, it suffices to use the expression~\eqref{eq:definition-c} of $c$)}, we identify its minimal polynomial as~$\mu_c=8t^3+8t^2+6t+1$, which gives the entry in Column~4 of Table~\ref{tab:2d-ter}.
Again, the numerical value for~$\alpha$ in Table~\ref{tab:2d} is given by
\begin{verbatim}
mu_c:=8*t^3+8*t^2+6*t+1:
evalf(-1-Pi/arccos(-fsolve(mu_c,t)));
\end{verbatim}
\[-3.320191962.\]

\subsubsection*{Step~4.} To conclude, we compute the polynomial
\[R(x)=x^3\mu_c\left(\frac{x^2+1}{2x}\right)=x^6+2x^5+6x^4+5x^3+6x^2+2x+1.\]
This polynomial does not have any root that is a root of unity, since it is irreducible and not cyclotomic:
\begin{verbatim}
R:=numer(subs(t=(x^2+1)/x/2, mu_c));	
irreduc(R),numtheory[iscyclotomic](R,x);
\end{verbatim}
\[\text{\em true}, \text{\em false}\]
This completes the proof that the generating function for this walk is not D-finite.

\section{Conclusion}
\subsection{Extensions}
The result of Denisov and Wachtel that is the basis of our work holds in arbitrary dimension and for walks with steps of arbitrary length.
The consequence that we state (Theorem~\ref{theo:proba}) is actually not  restricted to small step walks. Preliminary experiments indicate that it serves as a very efficient filter in a quest for holonomic excursion sequences.

In higher dimension, Theorem~\ref{theo:proba} no longer holds. The results of Denisov and Wachtel lead to a similar statement involving the first eigenvalue of a Laplacian on a spherical domain, which is difficult to estimate, not to mention irrationality proofs.

On the other hand, the arithmetic result of Theorem~\ref{theo:arithmetic} has a much wider scope of application in proving non-D-finiteness of combinatorial sequences and deserves to be better known.

\subsection{Open problems}
Since our approach brings together a strong arithmetic result (Theorem~\ref{theo:arithmetic}) and a strong probabilistic result (Theorem~\ref{theo:proba}), it appears natural to search for alternative simpler proofs of these results.

\subsubsection*{Proving that $\alpha$ is transcendental}
In Section \ref{subsec:algorithmic}, we are able to prove that for the 51 nonsingular models, the exponent $\alpha$ in the asymptotic expansion of the excursion sequence is irrational. It is worth mentioning that if it were possible to prove that $\alpha$ is not only irrational, but also transcendental, then Theorem~\ref{theo:arithmetic} would not be needed.

\subsubsection*{Simpler proof of Theorem~\ref{theo:arithmetic}}
The current proof of Theorem~\ref{theo:arithmetic} is based on several strong results from arithmetic theory \cite{ChCh85,Andre89,Katz70,DwGeSu94}. It would be interesting to know whether Theorem~\ref{theo:arithmetic} admits a simpler, direct proof.

\subsubsection*{Combinatorial proof of Theorem~\ref{theo:proba}} Theorem~\ref{theo:proba} relies on properties of the Brownian motion that are inherently continuous. Finding a purely combinatorial proof, \eg, with generating functions, would shed interesting light on this problem.

\subsubsection*{Total number of walks} Our work deals with the nature of the full generating function~$F_{\mathfrak S}(x,y,t)$ and its specialization at  $(x,y)=(0,0)$. 
The actual nature of~$F_{\mathfrak S}(1,1,t)$ is still partly unknown at this stage, although the conjectural classification in~\cite{BoKa09,BoKa} suggests that $F_{\mathfrak S}(1,1,t)$ is not-D-finite in all 51 non-singular cases with infinite group.
Another approach might be needed in this problem. Indeed, when $\mathfrak S=\{(1,0),(0,1),(1,1),(-1,-1)\}$, the total number of walks of length~$n$ seems to behave like~$c\cdot 4^n$, but the generating function itself seems to be non-D-finite. 

\medskip\noindent{\bf Acknowledgements.} We wish to thank Tanguy Rivoal, Denis Denisov and Vitali Wachtel for stimulating exchanges. We also thank the referees for their comments. Work of the first and the third authors was supported in part by the Microsoft Research-Inria Joint Centre.

 \bibliographystyle{unsrt}
 \makeatletter
 \def\@openbib@code{\itemsep=-3pt}
 \makeatother
%\bibliography{BoRaSa12}

\begin{thebibliography}{10}

\bibitem{Spitzer64}
Frank Spitzer.
\newblock {\em Principles of random walk}.
\newblock The University Series in Higher Mathematics. D. Van Nostrand Co.,
  Inc., Princeton, N.J.-Toronto-London, 1964.

\bibitem{Feller68}
William Feller.
\newblock {\em An introduction to probability theory and its applications.
  {V}ol. {I}}.
\newblock Third edition. John Wiley \& Sons Inc., New York, 1968.

\bibitem{DoSn84}
Peter~G. Doyle and J.~Laurie Snell.
\newblock {\em Random walks and electric networks}, volume~22 of {\em Carus
  Mathematical Monographs}.
\newblock Mathematical Association of America, Washington, DC, 1984.

\bibitem{Cohen92}
J.~W. Cohen.
\newblock {\em Analysis of random walks}, volume~2 of {\em Studies in
  Probability, Optimization and Statistics}.
\newblock IOS Press, Amsterdam, 1992.

\bibitem{FeFrSo92}
Roberto Fern{\'a}ndez, J{\"u}rg Fr{\"o}hlich, and Alan~D. Sokal.
\newblock {\em Random walks, critical phenomena, and triviality in quantum
  field theory}.
\newblock Texts and Monographs in Physics. Springer-Verlag, Berlin, 1992.

\bibitem{Weiss94}
George~H. Weiss.
\newblock {\em Aspects and applications of the random walk}.
\newblock Random Materials and Processes. North-Holland Publishing Co.,
  Amsterdam, 1994.

\bibitem{Woess00}
Wolfgang Woess.
\newblock {\em Random walks on infinite graphs and groups}, volume 138 of {\em
  Cambridge Tracts in Mathematics}.
\newblock Cambridge University Press, Cambridge, 2000.

\bibitem{BoBo08}
A.~A. Borovkov and K.~A. Borovkov.
\newblock {\em Asymptotic analysis of random walks}, volume 118 of {\em
  Encyclopedia of Mathematics and its Applications}.
\newblock Cambridge University Press, Cambridge, 2008.
\newblock Heavy-tailed distributions, Translated from the Russian by O. B.
  Borovkova.

\bibitem{LaLi10}
Gregory~F. Lawler and Vlada Limic.
\newblock {\em Random walk: a modern introduction}, volume 123 of {\em
  Cambridge Studies in Advanced Mathematics}.
\newblock Cambridge University Press, Cambridge, 2010.

\bibitem{BlVo11}
Philippe Blanchard and Dimitri Volchenkov.
\newblock {\em Random walks and diffusions on graphs and databases}.
\newblock Springer Series in Synergetics. Springer, Heidelberg, 2011.
\newblock An introduction.

\bibitem{Polya21}
Georg P{\'o}lya.
\newblock {\"U}ber eine {A}ufgabe der {W}ahrscheinlichkeitsrechnung betreffend
  die {I}rrfahrt im {S}tra\ss ennetz.
\newblock {\em Math. Ann.}, 84(1-2):149--160, 1921.

\bibitem{Narayana79}
T.~V. Narayana.
\newblock {\em Lattice path combinatorics with statistical applications},
  volume~23 of {\em Mathematical Expositions}.
\newblock University of Toronto Press, Toronto, Ont., 1979.

\bibitem{Mohanty79}
Sri~Gopal Mohanty.
\newblock {\em Lattice path counting and applications}.
\newblock Academic Press [Harcourt Brace Jovanovich Publishers], New York,
  1979.
\newblock Probability and Mathematical Statistics.

\bibitem{Kreweras65}
G.~Kreweras.
\newblock Sur une classe de probl\`emes de d\'enombrement li\'es au treillis
  des partitions des entiers.
\newblock {\em Cahiers du B.U.R.O.}, 6:5--105, 1965.

\bibitem{Maher78}
Mary~Anne Maher.
\newblock {\em Random walks on the positive quadrant}.
\newblock ProQuest LLC, Ann Arbor, MI, 1978.
\newblock Thesis (Ph.D.)--University of Rochester.

\bibitem{GB86}
Dominique Gouyou-Beauchamps.
\newblock Chemins sous-diagonaux et tableaux de {Y}oung.
\newblock In {\em Combinatoire \'enum\'erative ({M}ontreal, {Q}ue.,
  1985/{Q}uebec, {Q}ue., 1985)}, volume 1234 of {\em Lecture Notes in Math.},
  pages 112--125. Springer, Berlin, 1986.

\bibitem{GeZe92}
Ira~M. Gessel and Doron Zeilberger.
\newblock Random walk in a {W}eyl chamber.
\newblock {\em Proc. Amer. Math. Soc.}, 115(1):27--31, 1992.

\bibitem{FaIaMa99}
Guy Fayolle, Roudolf Iasnogorodski, and Vadim Malyshev.
\newblock {\em Random walks in the quarter-plane}, volume~40 of {\em
  Applications of Mathematics (New York)}.
\newblock Springer-Verlag, Berlin, 1999.
\newblock Algebraic methods, boundary value problems and applications.

\bibitem{BM02}
Mireille Bousquet-M{\'e}lou.
\newblock Counting walks in the quarter plane.
\newblock In {\em Mathematics and computer science, {II} ({V}ersailles, 2002)},
  Trends Math., pages 49--67. Birkh{\"a}user, Basel, 2002.

\bibitem{BoPe03}
M.~Bousquet-M{\'e}lou and M.~Petkov{\v{s}}ek.
\newblock Walks confined in a quadrant are not always {D}-finite.
\newblock {\em Theoret. Comput. Sci.}, 307(2):257--276, 2003.

\bibitem{Bousquet05}
M.~Bousquet-M{\'e}lou.
\newblock Walks in the quarter plane: {K}reweras' algebraic model.
\newblock {\em Ann. Appl. Probab.}, 15(2):1451--1491, 2005.

\bibitem{Mishna07}
M.~Mishna.
\newblock Classifying lattice walks in the quarter plane.
\newblock In {\em Proceedings of FPSAC'07, Tianjin, China}, 2007.
\newblock Available on-line at \url{http://arxiv.org/abs/math/0611651}.

\bibitem{BoKa09}
Alin Bostan and Manuel Kauers.
\newblock Automatic classification of restricted lattice walks.
\newblock In {\em DMTCS Proceedings of the 21st International Conference on
  Formal Power Series and Algebraic Combinatorics (FPSAC'09), Hagenberg,
  Austria}, pages 203--217, 2009.

\bibitem{KaKoZe09}
Manuel Kauers, Christoph Koutschan, and Doron Zeilberger.
\newblock Proof of {I}ra {G}essel's lattice path conjecture.
\newblock {\em Proc. Natl. Acad. Sci. USA}, 106(28):11502--11505, 2009.

\bibitem{Mishna09}
Marni Mishna.
\newblock Classifying lattice walks restricted to the quarter plane.
\newblock {\em J. Combin. Theory Ser. A}, 116(2):460--477, 2009.

\bibitem{MiRe09}
Marni Mishna and Andrew Rechnitzer.
\newblock Two non-holonomic lattice walks in the quarter plane.
\newblock {\em Theoret. Comput. Sci.}, 410(38-40):3616--3630, 2009.

\bibitem{BoKa10}
Alin Bostan and Manuel Kauers.
\newblock The complete generating function for {G}essel walks is algebraic.
\newblock {\em Proceedings of the American Mathematical Society},
  138(9):3063--3078, September 2010.
\newblock With an Appendix by Mark van Hoeij.

\bibitem{BoMi10}
Mireille Bousquet-M{\'e}lou and Marni Mishna.
\newblock Walks with small steps in the quarter plane.
\newblock In {\em Algorithmic probability and combinatorics}, volume 520 of
  {\em Contemp. Math.}, pages 1--39. Amer. Math. Soc., Providence, RI, 2010.

\bibitem{FaRa10}
G.~Fayolle and K.~Raschel.
\newblock On the holonomy or algebraicity of generating functions counting
  lattice walks in the quarter-plane.
\newblock {\em Markov Process. Related Fields}, 16(3):485--496, 2010.

\bibitem{DeWa13}
Denis Denisov and Vitali Wachtel.
\newblock Random walks in cones.
\newblock {\em Annals of Probability}, 2013.
\newblock To appear.

\bibitem{FaRa11}
G.~Fayolle and K.~Raschel.
\newblock Random walks in the quarter plane with zero drift: an explicit
  criterion for the finiteness of the associated group.
\newblock {\em Markov Process. Related Fields}, 17(4):619--636, 2011.

\bibitem{KuRa11}
Irina Kurkova and Kilian Raschel.
\newblock Explicit expression for the generating function counting {G}essel's
  walks.
\newblock {\em Adv. in Appl. Math.}, 47(3):414--433, 2011.

\bibitem{KuRa12}
Irina Kurkova and Kilian Raschel.
\newblock On the functions counting walks with small steps in the quarter
  plane.
\newblock {\em Publ. Math. Inst. Hautes {\'E}tudes Sci.}, 116(1):69--114, 2012.

\bibitem{FaRa12}
Guy Fayolle and Kilian Raschel.
\newblock Some exact asymptotics in the counting of walks in the quarter plane.
\newblock In {\em 23rd {I}ntern. {M}eeting on {P}robabilistic, {C}ombinatorial,
  and {A}symptotic {M}ethods for the {A}nalysis of {A}lgorithms ({A}of{A}'12)},
  Discrete Math. Theor. Comput. Sci. Proc., AQ, pages 109--124. Assoc. Discrete
  Math. Theor. Comput. Sci., Nancy, 2012.

\bibitem{Raschel12}
K.~Raschel.
\newblock Counting walks in a quadrant: a unified approach via boundary value
  problems.
\newblock {\em J. Eur. Math. Soc. (JEMS)}, 14(3):749--777, 2012.

\bibitem{MeMi13}
S.~Melczer and M.~Mishna.
\newblock Singularity analysis via the iterated kernel method.
\newblock In {\em DMTCS Proceedings of FPSAC'13, Paris, France}, pages
  481--492, 2013.

\bibitem{BCHKP}
A.~Bostan, F.~Chyzak, M.~\hoeven{van} Hoeij, M.~Kauers, and L.~Pech.
\newblock Explicit differentiably finite generating functions of walks with
  small steps in the quarter plane.
\newblock In preparation (2013).

\bibitem{BoKuRa13}
Alin Bostan, Irina Kurkova, and Kilian Raschel.
\newblock A human proof of {G}essel's lattice path conjecture.
\newblock Technical Report 1309.1023, arXiv, 2013.
\newblock Available on-line at \url{http://arxiv.org/abs/1309.1023}.

\bibitem{JoMiYe13}
Samuel Johnson, Marni Mishna, and Karen Yeats.
\newblock Towards a combinatorial understanding of lattice path asymptotics.
\newblock Technical Report 1305.7418, arXiv, 2013.
\newblock Available on-line at {http://arxiv.org/abs/math/1304.7418}.

\bibitem{Lipshitz1989}
L.~Lipshitz.
\newblock {$D$}-finite power series.
\newblock {\em Journal of Algebra}, 122(2):353--373, 1989.

\bibitem{FlSe09}
Philippe Flajolet and Robert Sedgewick.
\newblock {\em Analytic combinatorics}.
\newblock Cambridge University Press, Cambridge, 2009.

\bibitem{BaFl02}
C.~Banderier and P.~Flajolet.
\newblock Basic analytic combinatorics of directed lattice paths.
\newblock {\em Theoret. Comput. Sci.}, 281(1-2):37--80, 2002.

\bibitem{BoKa}
Alin Bostan and Manuel Kauers.
\newblock Unpublished notes, 2008.

\bibitem{DwGeSu94}
B.~Dwork, G.~Gerotto, and F.~J. Sullivan.
\newblock {\em An introduction to {$G$}-functions}, volume 133 of {\em Annals
  of Mathematics Studies}.
\newblock Princeton University Press, Princeton, NJ, 1994.

\bibitem{Andre00}
Yves Andr{\'e}.
\newblock S\'eries {G}evrey de type arithm\'etique. {I}. {T}h\'eor\`emes de
  puret\'e et de dualit\'e.
\newblock {\em Ann. of Math. (2)}, 151(2):705--740, 2000.

\bibitem{Garoufalidis08}
S.~Garoufalidis.
\newblock {$G$}-functions and multisum versus holonomic sequences.
\newblock {\em Advances in Mathematics}, 220(6):1945--1955, 2009.

\bibitem{Flajolet87}
Philippe Flajolet.
\newblock Analytic models and ambiguity of context-free languages.
\newblock {\em Theoret. Comput. Sci.}, 49(2-3):283--309, 1987.
\newblock Twelfth international colloquium on automata, languages and
  programming (Nafplion, 1985).

\bibitem{Andre89}
Yves Andr{\'e}.
\newblock {\em {$G$}-functions and geometry}.
\newblock Aspects of Mathematics, E13. Friedr. Vieweg \& Sohn, Braunschweig,
  1989.

\bibitem{Siegel1929}
Carl~Ludwig Siegel.
\newblock {\"U}ber einige {A}nwendungen {D}iophantischer {A}pproximationen.
\newblock {\em Abh. Pruess. Akad. Wiss. Phys. Math. Kl.}, 1:41--69, 1929.

\bibitem{BiTr32}
G.~D. Birkhoff and W.~J. Trjitzinsky.
\newblock Analytic theory of singular difference equations.
\newblock {\em Acta Mathematica}, 60:1--89, 1932.

\bibitem{Turrittin55}
H.~L. Turrittin.
\newblock Convergent solutions of ordinary linear homogeneous differential
  equations in the neighbourhood of an irregular singular point.
\newblock {\em Acta. Math.}, 93:27--66, 1955.

\bibitem{ChCh85}
D.~V. Chudnovsky and G.~V. Chudnovsky.
\newblock Applications of {P}ad\'e approximations to {D}iophantine inequalities
  in values of {$G$}-functions.
\newblock In {\em Number theory ({N}ew {Y}ork, 1983--84)}, volume 1135 of {\em
  Lecture Notes in Math.}, pages 9--51. Springer, Berlin, 1985.

\bibitem{Katz70}
N.~M. Katz.
\newblock Nilpotent connections and the monodromy theorem: {A}pplications of a
  result of {T}urrittin.
\newblock {\em Inst. Hautes \'Etudes Sci. Publ. Math.}, 39:175--232, 1970.

\bibitem{Azencott1980}
R.~Azencott.
\newblock {\em {\'E}cole d'{\'e}t{\'e} de probabilit{\'e}s de Saint-Flour
  VIII-1978}, chapter Grandes d{\'e}viations et applications, pages 1--176.
\newblock Number 774 in Lecture Notes in Mathematics. Springer, 1980.

\bibitem{DeBlassie87}
R.~Dante DeBlassie.
\newblock Exit times from cones in {${\bf R}^n$} of {B}rownian motion.
\newblock {\em Probab. Theory Related Fields}, 74(1):1--29, 1987.

\bibitem{BaSm97}
Rodrigo Ba{\~n}uelos and Robert~G. Smits.
\newblock Brownian motion in cones.
\newblock {\em Probab. Theory Related Fields}, 108(3):299--319, 1997.

\bibitem{NIST:DLMF}
{NIST Digital Library of Mathematical Functions}.
\newblock \url{http://dlmf.nist.gov/}, Release 1.0.6 of 2013-05-06.
\newblock Online companion to \cite{Olver:2010:NHMF}.

\bibitem{CoxLittleOShea1997}
D.A. Cox, J.B. Little, and D.~O'Shea.
\newblock {\em Ideals, varieties, and algorithms}.
\newblock Springer New York, 1997.

\bibitem{Varona06}
Juan~L. Varona.
\newblock Rational values of the arccosine function.
\newblock {\em Cent. Eur. J. Math.}, 4(2):319--322, 2006.

\bibitem{RoSc62}
J.~Barkley Rosser and Lowell Schoenfeld.
\newblock Approximate formulas for some functions of prime numbers.
\newblock {\em Illinois J. Math.}, 6:64--94, 1962.

\bibitem{Lehmer36}
Emma Lehmer.
\newblock On the magnitude of the coefficients of the cyclotomic polynomial.
\newblock {\em Bull. Amer. Math. Soc.}, 42(6):389--392, 1936.

\bibitem{Olver:2010:NHMF}
F.~W.~J. Olver, D.~W. Lozier, R.~F. Boisvert, and C.~W. Clark, editors.
\newblock {\em {NIST Handbook of Mathematical Functions}}.
\newblock Cambridge University Press, New York, NY, 2010.
\newblock Print companion to \cite{NIST:DLMF}.

\end{thebibliography}

\def\cprime{$'$}

 \clearpage
 \section{Appendix}\label{sec:appendix}

\normalsize
\mytable\label{tab:2d} The 51 nonsingular step sets in the quarter plane with an infinite group, and the asymptotics of their excursions 
(valid for even $n$ in the periodic cases).
The numbering used in column ``Tag'' corresponds to the order of the step sets in
Table~4 of~\cite{BoMi10}. 
Periodic walks have tags marked with a star.

\medskip
\def\itemA#1#2#3#4#5{#2 & #3 & #4 &\rule[-.5em]{0pt}{2.35em}\kern-1.2pt$#5$\kern-1.2pt&\global\let\item\itemB}
\def\itemB#1#2#3#4#5{#2 & #3 & #4 &\kern-1.2pt$#5$\kern-1.2pt\\\global\let\item\itemA}
\let\item\itemA
\footnotesize
\begin{longtable}{@{}c|c|c|c||c|c|c|c@{}}
  Tag & Steps & First terms & Asymptotics & Tag & Steps & First terms & Asymptotics \\\hline \endhead
 \item{3}{3}{\stepset11001010}{1, 0, 1, 2, 2, 13, 21, 67, 231}{\dfrac{3.799605^n}{n^{2.610604}}}%
 \item{4}{4}{\stepset10101010}{1, 0, 0, 2, 2, 0, 16, 44, 28}{\dfrac{ 3.608079^n}{n^{2.720448}}}%
 \item{5}{5}{\stepset10001011}{1, 0, 1, 2, 2, 14, 21, 76, 252}{\dfrac{3.799605^n}{n^{2.318862}}}%
 \item{6}{6}{\stepset01010011}{1, 0, 1, 2, 2, 13, 21, 67, 231}{\dfrac{3.799605^n}{n^{2.610604}}}%
 \item{7}{7$^\star$}{\stepset10100011}{1, 0, 1, 0, 4, 0, 29, 0, 230}{\dfrac{3.800378^{n}}{n^{2.521116}}}%
 \item{8}{8}{\stepset00111010}{1, 0, 1, 1, 2, 7, 10, 38, 89}{\dfrac{3.799605^n}{n^{3.637724}}}%
 \item{9}{9}{\stepset01110010}{1, 0, 1, 1, 2, 7, 10, 38, 89 }{\dfrac{3.799605^n}{n^{3.637724}}}%
 \item{10}{10}{\stepset10110010}{1, 0, 0, 1, 2, 0, 5, 26, 28}{\dfrac{3.608079^n}{n^{3.388025}}}%
 \item{11}{11$^\star$}{\stepset10100110}{1, 0, 0, 0, 2, 0, 6, 0, 42}{\dfrac{3.800378^{n}}{n^{3.918957}}}%
 \item{12}{12}{\stepset00110110}{1, 0, 0, 1, 0, 1, 5, 1, 18}{\dfrac{3.799605^n}{n^{5.136154}}}%
 \item{14}{14}{\stepset00110011}{1, 0, 0, 1, 2, 0, 5, 26, 28 }{\dfrac{3.608079^n}{n^{3.388025}}}%
 \item{16}{16}{\stepset11010001}{1, 0, 1, 2, 2, 14, 21, 76, 252}{\dfrac{3.799605^n}{n^{2.318862}}}%
 \item{17}{17$^\star$}{\stepset11000101}{1, 0, 1, 0, 4, 0, 29, 0, 230}{\dfrac{3.800378^{n}}{n^{2.521116}}}%
 \item{18}{18}{\stepset01010101}{1, 0, 0, 2, 2, 0, 16, 44, 28}{\dfrac{ 3.608079^n}{n^{2.720448}}}%
 \item{19}{19$^\star$}{\stepset01100101}{1, 0, 0, 0, 2, 0, 6, 0, 42}{\dfrac{3.800378^{n}}{n^{3.918957}}}%
\item{20}{20}{\stepset10001111}{1, 0, 1, 2, 4, 14, 45, 120, 468}{\dfrac{4.372923^n}{n^{2.482876}}}%
 \item{21}{21}{\stepset01001111}{1, 0, 1, 1, 4, 7, 25, 64, 201}{\dfrac{4.214757^n}{n^{3.347502}}}%
 \item{23}{23}{\stepset01111010}{1, 0, 2, 1, 10, 14, 75, 178, 738}{\dfrac{4.729032^n}{n^{3.320192}}}%
 \item{24}{24}{\stepset11011010}{1, 0, 2, 2, 10, 26, 86, 312, 1022}{\dfrac{4.729032^n}{n^{2.757466}}}%
 \item{25}{25}{\stepset10011011}{1, 0, 2, 2, 11, 27, 101, 348, 1237}{\dfrac{4.729032^n}{n^{2.397625}}}%
 \item{26}{26}{\stepset11010011}{1, 0, 2, 2, 11, 27, 101, 348, 1237}{\dfrac{4.729032^n}{n^{2.397625}}}%
 \item{27}{27$^\star$}{\stepset11000111}{1, 0, 2, 0, 13, 0, 124, 0, 1427}{\dfrac{4.569086^{n}}{n^{2.503534}}}%
 \item{28}{28}{\stepset01010111}{1, 0, 1, 2, 4, 13, 36, 111, 343}{\dfrac{4.214757^n}{n^{2.742114}}}%
 \item{29}{29$^\star$}{\stepset01100111}{1, 0, 1, 0, 5, 0, 35, 0, 313}{\dfrac{4.569086^{n}}{n^{3.985964}}}%
 \item{30}{30}{\stepset10110011}{1, 0, 1, 1, 6, 17, 58, 202, 749}{\dfrac{5^n}{n^{2.722859}}}%
 \item{31}{31}{\stepset00110111}{1, 0, 0, 1, 2, 1, 11, 27, 60}{\dfrac{4.372923^n}{n^{4.070925}}}%
 \item{32}{32$^\star$}{\stepset11100011}{1, 0, 2, 0, 13, 0, 124, 0, 1427}{\dfrac{4.569086^{n}}{n^{2.503534}}}%
 \item{33}{33}{\stepset11110010}{1, 0, 1, 1, 4, 7, 25, 64, 201}{\dfrac{4.214757^n}{n^{3.347502}}}%
 \item{34}{34$^\star$}{\stepset11100110}{1, 0, 1, 0, 5, 0, 35, 0, 313}{\dfrac{4.569086^{n}}{n^{3.985964}}}%
 \item{35}{35}{\stepset01110110}{1, 0, 1, 1, 3, 8, 19, 65, 177}{\dfrac{4.729032^n}{n^{4.514931}}}%
 \item{36}{36}{\stepset10110110}{1, 0, 0, 1, 2, 1, 11, 27, 60}{\dfrac{4.372923^n}{n^{4.070925}}}%
 \item{37}{37}{\stepset11101010}{1, 0, 1, 2, 4, 13, 36, 111, 343}{\dfrac{4.214757^n}{n^{2.742114}}}%
 \item{38}{38}{\stepset01011011}{1, 0, 2, 2, 10, 26, 86, 312, 1022}{\dfrac{4.729032^n}{n^{2.757466}}}%
 \item{39}{39}{\stepset01101110}{1, 0, 1, 1, 3, 8, 19, 65, 177}{\dfrac{4.729032^n}{n^{4.514931}}}%
 \item{40}{40}{\stepset10101110}{1, 0, 0, 2, 4, 8, 28, 108, 372}{\dfrac{5^n}{n^{3.383396}}}%
 \item{41}{41}{\stepset11010101}{1, 0, 1, 2, 4, 14, 45, 120, 468}{\dfrac{4.372923^n}{n^{2.482876}}}%
 \item{42}{42}{\stepset01110101}{1, 0, 0, 2, 4, 8, 28, 108, 372}{\dfrac{5^n}{n^{3.383396}}}%
 \item{43}{43}{\stepset11101011}{1, 0, 2, 2, 13, 27, 140, 392, 1882}{\dfrac{5.064419^n}{n^{2.491053}}}%
 \item{44}{44}{\stepset10111011}{1, 0, 2, 3, 15, 51, 208, 893, 3841}{\dfrac{5.891838^n}{n^{2.679783}}}%
 \item{45}{45}{\stepset00111111}{1, 0, 1, 1, 5, 8, 40, 91, 406}{\dfrac{5.064419^n}{n^{4.036441}}}%
 \item{46}{46}{\stepset10101111}{1, 0, 1, 2, 8, 22, 101, 364, 1618}{\dfrac{5.799605^n}{n^{2.959600}}}%
 \item{47}{47}{\stepset10111110}{1, 0, 1, 3, 7, 29, 101, 404, 1657}{\dfrac{5.891838^n}{n^{3.471058}}}%
 \item{48}{48}{\stepset11110110}{1, 0, 1, 1, 5, 8, 40, 91, 406}{\dfrac{5.064419^n}{n^{4.036441}}}%
 \item{49}{49}{\stepset11010111}{1, 0, 2, 2, 13, 27, 140, 392, 1882}{\dfrac{5.064419^n}{n^{2.491053}}}%
 \item{50}{50}{\stepset11110011}{1, 0, 2, 3, 15, 51, 208, 893, 3841}{\dfrac{5.891838^n}{n^{2.679783}}}%
 \item{51}{51}{\stepset01110111}{1, 0, 1, 3, 7, 29, 101, 404, 1657}{\dfrac{5.891838^n}{n^{3.471058}}}%
 \item{52}{52}{\stepset10110111}{1, 0, 1, 1, 8, 18, 90, 301, 1413}{\dfrac{5.799605^n}{n^{3.042101}}}%
 \item{53}{53}{\stepset11110101}{1, 0, 1, 2, 8, 22, 101, 364, 1618}{\dfrac{5.799605^n}{n^{2.959600}}}%
 \item{54}{54}{\stepset11011111}{1, 0, 3, 5, 30, 111, 548, 2586, 13087}{\dfrac{6.729032^n}{n^{2.667986}}}%
 \item{55}{55}{\stepset11111110}{1, 0, 2, 4, 16, 64, 266, 1210, 5630}{\dfrac{6.729032^n}{n^{3.497037}}}%
 \item{56}{56}{\stepset01111111}{1, 0, 2, 4, 16, 64, 266, 1210, 5630}{\dfrac{6.729032^n}{n^{3.497037}}}%
\end{longtable}%

\normalsize
\mytable\label{tab:2d-ter} 
Minimal polynomials of the growth constants~$\rho$ and of the correlation coefficients~$c$ for the 51 nonsingular walks in the quarter plane with an infinite group. Each blank entry in the table coincides with the first non-empty entry above it.

\medskip

\def\item#1#2#3#4#5#6#7{#2 & #3 &
$#6$ & $#7$
  \\}
\scriptsize
\begin{longtable}{@{}c|c|c|c@{}}
  Tag & Steps  & Minimal polynomial $\mu_\rho$ of $\rho$ & Minimal polynomial $\mu_c$ of $c=-\cos(\frac{\pi}{1+\alpha})$   \\\hline \endhead
 \item{12}{12}{\stepset00110110}{1, 0, 0, 1, 0, 1, 5, 1, 18}{\dfrac{3.799605^n}{n^{5.136154}}}{t^4+t^3-8t^2-36t-11 }{t^4+\frac92t^3+\frac{27}{4}t^2+\frac{35}{8}t+\frac{17}{16}}%
 \item{(5, 16)}{5, 16}{\stepset10001011, \stepset11010001}{1, 0, 1, 2, 2, 14, 21, 76, 252}{\dfrac{3.799605^n}{n^{2.318862}}}{  }{t^4-\frac92t^3+\frac{27}{4}t^2-\frac{35}{8}t+\frac{17}{16}}%
 \item{(3,6)}{3, 6}{\stepset11001010, \stepset01010011}{1, 0, 1, 2, 2, 13, 21, 67, 231}{\dfrac{3.799605^n}{n^{2.610604}}}{}{t^8+\frac14t^6-\frac{3}{16}t^4+\frac{3}{64}t^2-\frac{1}{256}}%
 \item{(8, 9)}{8, 9}{\stepset00111010, \stepset01110010}{1, 0, 1, 1, 2, 7, 10, 38, 89}{\dfrac{3.799605^n}{n^{3.637724}}}{  }{  }%

\hline
 \item{(7, 17)}{7$^\star$, 17$^\star$}{\stepset10100011, \stepset11000101}{1, 0, 1, 0, 4, 0, 29, 0, 230}{\dfrac{3.800378^n}{n^{2.521116}}}{t^6-11t^4-32t^2-256}{t^6+\frac34 t^4+2t^2-\frac12}%
 \item{(11, 19)}{11$^\star$, 19$^\star$}{\stepset10100110, \stepset01100101}{1, 0, 0, 0, 2, 0, 6, 0, 42}{\dfrac{3.800378^n}{n^{3.918957}}}{  }{  }%

\hline
 \item{(4,18)}{4, 18}{\stepset10101010, \stepset01010101}{1, 0, 0, 2, 2, 0, 16, 44, 28}{\dfrac{ 3.608079^n}{n^{2.720448}}}{t^5+t^4+t^3-30t^2-96t-91}{t^{10}+2t^8+t^6-\frac{1}{64}t^4+\frac{3}{256}t^2-\frac{1}{1024}}%
 \item{(10, 14)}{10, 14}{\stepset10110010, \stepset00110011}{1, 0, 0, 1, 2, 0, 5, 26, 28}{\dfrac{3.608079^n}{n^{3.388025}}}{  }{  }%

\hline
\item{(20, 41)}{20, 41}{\stepset10001111, \stepset11010101}{1, 0, 1, 2, 4, 14, 45, 120, 468}{\dfrac{4.372923^n}{n^{2.482876}}}{t^5-2t^4-4t^3-31t^2+23t-41}{t^{10}+t^8+\frac{157}{32}t^6+\frac{145}{128}t^4+\frac{1681}{512}t^2-\frac{2209}{2048}}
\item{(31, 36)}{31, 36}{\stepset00110111, \stepset10110110}{1, 0, 0, 1, 2, 1, 11, 27, 60}{\dfrac{4.372923^n}{n^{4.070925}}}{  }{  }%

\hline
 \item{(21, 33)}{21, 33}{\stepset01001111, \stepset11110010}{1, 0, 1, 1, 4, 7, 25, 64, 201}{\dfrac{4.214757^n}{n^{3.347502}}}{t^5+2t^4-7t^3-46t^2-116t-131}{t^{10}+\frac32 t^8+\frac{13}{16}t^6+\frac{5}{64}t^4+\frac{3}{256}t^2-\frac{1}{1024}}
 \item{(28, 37)}{28, 37}{\stepset01010111, \stepset11101010}{1, 0, 1, 2, 4, 13, 36, 111, 343}{\dfrac{4.214757^n}{n^{2.742114}}}{  }{  }%

\hline
 \item{23}{23}{\stepset01111010}{1, 0, 2, 1, 10, 14, 75, 178, 738}{\dfrac{4.729032^n}{n^{3.320192}}}{t^3+t^2-18t-43}{t^3+t^2+\frac34t+\frac18}%
 \item{(24, 38)}{24, 38}{\stepset11011010, \stepset01011011}{1, 0, 2, 2, 10, 26, 86, 312, 1022}{\dfrac{4.729032^n}{n^{2.757466}}}{  }{t^3-t^2+\frac34t-\frac18}%
 \item{(25, 26)}{25, 26}{\stepset10011011, \stepset11010011}{1, 0, 2, 2, 11, 27, 101, 348, 1237}{\dfrac{4.729032^n}{n^{2.397625}}}{  }{t^6-t^4+\frac{7}{16} t^2-\frac{5}{64}}%
 \item{(35, 39)}{35, 39}{\stepset01110110, \stepset01101110}{1, 0, 1, 1, 3, 8, 19, 65, 177}{\dfrac{4.729032^n}{n^{4.514931}}}{  }{  }%

\hline
 \item{(27, 32)}{27$^\star$, 32$^\star$}{\stepset11000111, \stepset11100011}{1, 0, 2, 0, 13, 0, 124, 0, 1427}{\dfrac{4.569086^n}{n^{2.503534}}}{t^6-20t^4-16t^2-48}{t^6+2t^4+\frac52t^2-\frac34}%
 \item{(29, 34)}{29$^\star$, 34$^\star$}{\stepset01100111, \stepset11100110}{1, 0, 1, 0, 5, 0, 35, 0, 313}{\dfrac{4.569086^n}{n^{3.985964}}}{  }{  }%

\hline
 \item{30}{30}{\stepset10110011}{1, 0, 1, 1, 6, 17, 58, 202, 749}{\dfrac{5^n}{n^{2.722859}}}{t-5}{t-\frac14}%
 \item{(40, 42)}{40, 42}{\stepset10101110, \stepset01110101}{1, 0, 0, 2, 4, 8, 28, 108, 372}{\dfrac{5^n}{n^{3.383396}}}{  }{t+\frac14}%

\hline
 \item{(43, 49)}{43, 49}{\stepset11101011, \stepset11010111}{1, 0, 2, 2, 13, 27, 140, 392, 1882}{\dfrac{5.064419^n}{n^{2.491053}}}{t^6+2t^5-18t^4-67t^3-108t^2-40t-19}{t^{12}+\frac{11}{4}t^{10}+\frac{107}{16}t^8+\frac{145}{32}t^6+\frac{455}{128}t^4-\frac{2859}{1024}t^2+\frac{1521}{4096}}%
 \item{(45, 48)}{45, 48}{\stepset00111111, \stepset11110110}{1, 0, 1, 1, 5, 8, 40, 91, 406}{\dfrac{5.064419^n}{n^{4.036441}}}{  }{  }%

\hline
 \item{(44, 50)}{44, 50}{\stepset10111011, \stepset11110011}{1, 0, 2, 3, 15, 51, 208, 893, 3841}{\dfrac{5.891838^n}{n^{2.679783}}}{t^7+3t^6-18t^5-127t^4-328t^3-560t^2-704t-448}{t^{14}+\frac{23}{4}t^{12}+\frac{25}{2}t^{10}+\frac{971}{64}t^8+\frac{421}{32}t^6+\frac{307}{64}t^4+\frac{107}{64}t^2-\frac{49}{256}}%
 \item{(47, 51)}{47, 51}{\stepset10111110, \stepset01110111}{1, 0, 1, 3, 7, 29, 101, 404, 1657}{\dfrac{5.891838^n}{n^{3.471058}}}{  }{  }%

\hline
 \item{52}{52}{\stepset10110111}{1, 0, 1, 1, 8, 18, 90, 301, 1413}{\dfrac{5.799605^n}{n^{3.042101}}}{t^4-7t^3+10t^2-24t+37 }{t^4+\frac12t^3+\frac{55}{4}t^2+\frac{19}{8}t+\frac{1}{16}}%
 \item{(46, 53)}{46, 53}{\stepset10101111, \stepset11110101}{1, 0, 1, 2, 8, 22, 101, 364, 1618}{\dfrac{5.799605^n}{n^{2.959600}}}{ }{t^4-\frac12t^3+\frac{55}{4}t^2-\frac{19}{8}t+\frac{1}{16}}%

\hline
 \item{54}{54}{\stepset11011111}{1, 0, 3, 5, 30, 111, 548, 2586, 13087}{\dfrac{6.729032^n}{n^{2.667986}}}{t^3-5t^2-10t-11}{t^3+\frac{11}{4}t-\frac78}%
 \item{(55, 56)}{55, 56}{\stepset11111110, \stepset01111111}{1, 0, 2, 4, 16, 64, 266, 1210, 5630}{\dfrac{6.729032^n}{n^{3.497037}}}{  }{t^3+\frac{11}{4}t+\frac78}%
\end{longtable}%
\end{document}